\documentclass{elsarticle}

\usepackage{latexsym,lmodern,graphicx,enumitem,amssymb,amsfonts,amsmath,amsthm,dsfont,color}

\newtheorem{theorem}{Theorem}
\newtheorem{proposition}[theorem]{Proposition}
\newtheorem{lemma}[theorem]{Lemma}

\theoremstyle{definition}
\newtheorem{example}[theorem]{Example}
\newtheorem{definition}[theorem]{Definition}
\newtheorem{remark}[theorem]{Remark}

\numberwithin{equation}{section}
\numberwithin{theorem}{section}

\newcommand{\xmlpar}{\par \noindent}

\begin{document}

\title{Hypercontractivity for Markov semi-groups\tnoteref{t1,t2}}

\tnotetext[t1]{Work supported by the Labex MME-DII funded by ANR, reference ANR-11-LBX-0023-01 and ANR-15-CE40-0020-03 - LSD - Large Stochastic Dynamics, the grant of the Simone and Cino Del Duca Foundation, 
and the grant Research Impulse Award DRI165BZ in the UK.}

\author[1]{C. Roberto\corref{cor1}}
\ead{croberto@math.cnrs.fr}

\author[2]{B. Zegarlinski}
\ead{bzegarli@math.univ-toulouse.fr}

\cortext[cor1]{Corresponding author}

\address[1]{Universit\'e Paris Nanterre, MODAL'X, UMR CNRS 9023, UPL, FP2M, CNRS FR 2036, 200 avenue de la R\'epublique 92000 Nanterre}
\address[2]{Institut de Math\'ematiques de Toulouse ; CNRS UMR5219,
UPS, F-31062 Toulouse Cedex 9, France}




%
%

\date{24 February 2022}

\begin{abstract}
We investigate in a systematic way hypercontractivity property in Orlicz
spaces for Markov semi-groups related to homogeneous and non homogeneous
diffusions in $\mathbb{R}^{n}$. We provide an explicit construction of
a family of Orlicz functions for which we prove that the associated hypercontractivity
property is equivalent to a suitable functional inequality.
\end{abstract}


%
%
\begin{keyword}
Hypercontractivity, Ornstein-Uhlenbeck semigroup,
Markov semi-groups, $\Gamma _{2}$ calculus.
%
\end{keyword}


\maketitle

\section{Introduction}
\label{sec1}

The first aim of this paper is to give a unified setting for strong contractivity
properties of Markov semi-group to be satisfied with respect to suitable
family of Luxembourg norms in Orlicz spaces.

Initiated by Nelson in the late sixties
\cite{nelson1,nelson2,nelson3} in quantum field theory, the notion of hypercontractivity
of the Ornstein-Ulhenbeck process was put in light by Gross' seminal work
\cite{gross}. One of the main observation of Gross is that hypercontractivity
is equivalent to the so called log-Sobolev inequality. See also
\cite{federbush,stam} for earlier papers on related topic.

More precisely, let $\gamma _{n}$ be the standard Gaussian measure on
$\mathbb{R}^{n}$. Then the Ornstein-Uhlenbeck semi-group
$(P_{t})_{t \geq 0}$, whose infinitesimal generator is
$L:=\Delta - x \cdot \nabla $ (with the dot sign standing for the Euclidean
scalar product), is reversible with respect to $\gamma _{n}$ and satisfies
the following remarkable \emph{hypercontractivity property}: for any
$f \colon \mathbb{R}^{n} \to \mathbb{R}$ smooth enough it holds
\begin{equation*}
\| P_{t}f \|_{q(t)} \leq \|P_{s} f\|_{q(s)}, \qquad s \leq t
\end{equation*}
where $q(t)=1+(q(0)-1)e^{2t}$, $q(0) \geq 1$, and
$\|g \|_{p}^{p} := \int |g|^{p} d\gamma _{n}$, $p \geq 1$. Such a contraction
property is equivalent \cite{gross} to the following log-Sobolev inequality:
for any $f \colon \mathbb{R}^{n} \to \mathbb{R}$ smooth enough, it holds
\begin{equation*}
\mathrm{Ent}_{\gamma _{n}}(f^{2}):=\int f^{2} \log f^{2} d\gamma _{n}
- \int f^{2} d\gamma _{n} \log \int f^{2} d\gamma _{n} \leq 2 \int |
\nabla f|^{2} d\gamma _{n} .
\end{equation*}
Using Gross' paper and $\Gamma ^{2}$-calculus of Bakry-Emery
\cite{bakry-emery,bakry}, it can be immediately proved that any semi-group
associated to a diffusion of the form
$L:=\Delta - \nabla V \cdot \nabla $, with $V$ satisfying
$\mathrm{Hess}(V) \geq \rho >0$, as a matrix, enjoys the hypercontractivity
property as above with reference measure having density $e^{-V}$ with respect
to the Lebesgue measure and $q(t)=1+(q(0)-1)e^{(4/\rho ) t}$,
$q(0) \geq 1$.

From the seventies, both the hypercontractivity property and the log-Sobolev
inequality found a huge amount of applications in various fields,
including Analysis (isoperimetry, concentration of measure phenomenon,
convex geometry), Statistical mechanics, Information Theory and others.
Giving an exhaustive presentation of the literature is out of reach. We
refer to the textbooks \cite{ane,ledouxbook,GZ,BGL,massart,odonnell} for
an introduction and references.

Now, let $\Phi \colon \mathbb{R}_{+} \to \mathbb{R}_{+}$ be a continuous
convex function satisfying $\Phi (x)=0$ iff $x=0$. Later on we may call
such a function a \emph{Young function}.\footnote{Note however that, usually,
one does not require in the definition of a Young function neither the
regularity assumption, nor the condition $\phi (x)=0$ iff $x=0$.} Then,
given $f\colon \mathbb{R}^{n} \to \mathbb{R}$ such that
$\int \Phi (\alpha f)d\gamma _{n} < + \infty $ for some $\alpha >0$, one
can define the so-called Luxembourg norm associated to $\Phi $ and
$\gamma _{n}$ as
\begin{equation*}
\| f \|_{\Phi } = \inf \left \{  \lambda >0 : \int \Phi \left (
\frac{|f|}{\lambda } \right ) d\gamma _{n} {\leq 1} \right
\}  .
\end{equation*}
The power function $\Phi (x)=|x|^{p}$, $p \geq 1$, trivially corresponds
to the usual $\mathbb{L}_{p}$-norm introduced above
$\|f\|_{\Phi }= \| f\|_{p}$. The space of all functions with finite Luxembourg
norm will be denoted by $\mathbb{L}_{\Phi }(\gamma _{n})$ (or simply
$\mathbb{L}_{\Phi }$ when there is no confusion, note however that norms
are always computed with an underlying measure).

With this definition at hand, for the family of Young functions
$\Phi _{t}(x)=|x|^{q(t)}$, $x \in \mathbb{R}$, $t \geq 0$ with
$q(t)=1+(q(0)-1)e^{2t}$, $q(0) \geq 1$, the hypercontractivity above can
be restated as follows
\begin{equation*}
\| P_{t} f \|_{\Phi _{t}} \leq \| P_{s} f\|_{\Phi _{s}}, \qquad s
\leq t .
\end{equation*}
In other words, the Ornstein-Uhlenbeck semi-group is a contraction along
the family of Orlicz spaces $(\mathbb{L}_{\Phi _{t}})_{t \geq 0}$.

Following Gross' ideas, in \cite{barthe-cattiaux-roberto} the authors proved
that some contraction property along a different type of family of Orlicz
spaces could hold. Consider the following infinitesimal generator\footnote{More
precisely one should consider a regularized version of $|x|^{\alpha }$ in
a neighborhood of the origin. For the sake of simplicity we
may avoid such technical considerations in this introduction, that are
irrelevant for our purpose, and we refer to
\cite{barthe-cattiaux-roberto} for details.} in dimension $n$,
$L:=\Delta - \nabla V \cdot \nabla $, with
$V(x) = \sum _{i=1}^{n} |x_{i}|^{\alpha }$, $\alpha \in [1,2]$,
$x \in \mathbb{R}^{n}$. Denote by $(P_{t})_{t \geq 0}$ the associated semi-group
and by $\mu (dx)=Z^{-1}e^{-V(x)}dx$, $x \in \mathbb{R}^{n}$, the associated
reversible probability measure, $Z:=\int e^{-V(x)}dx$ being the normalization
constant. Finally define $\Phi _{t}(x)=|x|^{p}e^{q(t)F(x)}$ with
$F(x):=\log (1+x)^{2(\alpha -1)/\alpha } - \log (2)^{2(\alpha -1)/
\alpha }$, $q(t)=Ct$ for some constant $C >0$, and $p >1$. By construction
$\mathbb{L}_{\Phi _{t}} \subset \mathbb{L}_{\Phi _{s}} \subset
\mathbb{L}_{p}$ for any $s \leq t$ and
$\mathbb{L}_{\Phi _{t}} \not \subset \mathbb{L}_{p+\varepsilon }$ for any
$\varepsilon >0$, $t \geq 0$ and $\alpha \in [1,2)$ (since
$e^{q(t)F(x)} \ll |x|^{\varepsilon }$ near infinity, for any
$\alpha \in [1,2)$).

In \cite{barthe-cattiaux-roberto} it is proved that
$(P_{t})_{t \geq 0}$ is a contraction along the family of Orlicz spaces
$(\mathbb{L}_{\Phi _{t}})_{t \geq 0}$: namely that, for any
$s \leq t$, it holds
$\| P_{t} f \|_{\Phi _{t}} \leq \| P_{s} f\|_{\Phi _{s}}$. Moreover, such
a contraction property is equivalent to the following, known as $F$-Sobolev
inequality (\cite{Rosen,W00}): for any
$f \colon \mathbb{R}^{n} \to \mathbb{R}$ it holds
\begin{equation*}
\int f^{2} F \left ( \frac{f^{2}}{\int f^{2} d\gamma _{n}} \right ) d
\mu \leq C' \int |\nabla f|^{2} d\mu
\end{equation*}
where $C'$ is a constant that depends on $C$ and $\alpha $. Note that
$\alpha =2$ corresponds to the Gaussian case depicted above. Such inequalities
and contraction properties were used to establish dimension free isoperimetric
inequalities and concentration properties for $\mu $
\cite{barthe-cattiaux-roberto,BCR07}. We refer the reader to
\cite{W14} for explicit criterion for a $F$-Sobolev inequality to hold,
and to \cite{W00} for associated contraction property of the semi-group.

Motivated by the previous two fundamental examples, the aim of this paper
is to investigate on contraction properties
$\| P_{t} f \|_{\Phi _{t}} \leq \| P_{s} f\|_{\Phi _{s}}$,
$s \leq t$, along abstract general family of Orlicz spaces
$(\mathbb{L}_{\Phi _{t}})_{t \geq 0}$, together with possible connection
with functional inequalities of $F$-Sobolev type.

The second objective of the paper is to explore a more general setting
which would include inhomogeneous diffusion operators associated to one
parameter families of probability measures that we now introduce. Consider
$L_{t}:=\Delta - \nabla V_{t} \cdot \nabla $, $t \geq 0$, on
$\mathbb{R}^{n}$, with $V_{t}$ smooth enough and such that
$\int e^{-V_{t}}=1$ so that $\mu _{t}(dx)=e^{-V_{t}(x)}dx$ is a probability
measure on $\mathbb{R}^{n}$ for all $t \geq 0$. The associated semi-group
will be denoted by $(P^{(t)}_{s})_{s \geq 0}$ (we refer to e.g.
\cite{bakry,GZ} for its construction and related technicalities) which
is reversible in $\mathbb{L}_{2}(\mu _{t})$. One wishes to obtain contraction
bounds of the type
$\| P_{t}^{(t)} f \|_{\Phi _{t}} \leq m(t,s) \| P_{s}^{(s)} f\|_{
\Phi _{s}}$, $s \leq t$ for some function $m$ possibly equal to~1.

Thus in the more general setting we not only change with time the Orlicz
functions, but also the underlying probability measures. Here we are interested
in a class of flows through Orlicz spaces and how
it relates to an action of contractions.

Besides interesting \emph{generalizations}, we hope that our results
can be used in the future to study linear and nonlinear parabolic time
dependent problems. Note that in case of a time dependent parabolic problem
of the form
\begin{equation*}
\begin{split} \partial _{t} u & = L u + \beta _{t}\cdot \nabla u
\equiv L_{t} u
\\
u_{|t=0} & =f
\end{split}
\end{equation*}
under suitable conditions on the coefficient $\beta _{t}$, one can hope
to approximate the solution on small intervals
$s\in [t_{n},t_{n+1}]$ by $P_{s-t_{n}}^{(t_{n})}u_{t_{n}}$. Then one needs
to setup a suitable framework to control convergence of such approximation
when $\sup _{n}|t_{n+1}-t_{n}|\to 0$. While we mention here
as an example a linear problem, we remark that nonlinear semigroups with
hypercontractivity properties has been studied in \cite{FRZ} and one could
possibly extend the above given idea to the nonlinear time dependent parabolic
problems.

Moreover, as suggested to us by a referee, since Gross's theorem
is established for symmetric Markov processes associated with Dirichlet
forms, it is reasonable to conjecture that most of the result of this paper
can be extended to such an abstract framework and leave this to future
investigation.

After Section~\ref{sec:technical}, that collects some technical facts about
Orlicz functions-norms, we deal in Section~\ref{sec:homogeneous} with the
homogeneous setting.

Our first main theorem is Theorem~\ref{th:hyp} that asserts that, for any
properly chosen family of Orlicz spaces (see Section~\ref{sec:standard}), we have
\begin{equation*}
\| P_{t} f \|_{\Phi _{t}} \leq \| P_{s} f\|_{\Phi _{s}}
\end{equation*}
if and only if some inequality of log-Sobolev type holds. Theorem~\ref{th:hyp} encompasses the above two known fundamental examples and can
therefore be seen as a generalization of Gross' theorem.

Section~\ref{sec:inhomogeneous} is devoted to the time-inhomogeneous setting.
Our second main result is {\ref{th:main1} which constitutes some
analog of Gross' theorem for inhomogeneous Markov semi-groups.

\medskip

\noindent
\textbf{Acknowledgment.} We warmly thank T. Tao for useful discussion
on the topic of this paper, and the anonymous referees for their suggestions
to improve its content and presentation.

\section{Technical preparations}
\label{sec:technical}

In this section we collect some useful technical facts on various aspects
of Young Functions and Luxembourg norms.

\subsection{Youngs functions}
\label{sec:Young}

An even continuous convex function
$\Phi \colon \mathbb{R} \to \mathbb{R}_{+}$ satisfying $\Phi (x)=0$ iff
$x=0$ is called a \emph{Young function}. If in addition
$\lim _{x \to 0} \Phi (x)/x=0$,
$\lim _{x \to \infty } \Phi (x)/x = + \infty $, and
$\Phi (\mathbb{R}) \subset \mathbb{R}^{+}$, $\Phi $ is called a
\emph{nice Young function}, or \emph{$N$-function} \cite{rao-ren}.

Classical examples include $\Phi (x)=|x|^{p}$, $p \geq 1$ which is a nice
Young function only for $p>1$; $\Phi (x)=e^{|x|}-|x|-1$,
$\Phi (x)=e^{|x|^{\delta }}-1$, $\delta >1$ are nice Young functions.

We say that $\Phi $ satisfies the $\Delta _{2}$-condition if for some
$K >0$ and all $x \geq 0$, it holds $\Phi (2x) \leq K\Phi (x)$. A useful
consequence of the $\Delta _{2}$-condition is the fact that
$x\Phi '(x)$ compares to $\Phi $. More precisely,
%
\begin{equation}
\label{eq:delta2}
\Phi (x) \leq x \Phi '(x) \leq (K-1) \Phi (x) .
\end{equation}
The first inequality follows from the convexity property of $\Phi $ and
$\Phi (0)=0$, while the second is a consequence of the $\Delta _{2}$-condition
and $\Phi (2x)-\Phi (x)=\int _{x}^{2x}\Phi '(t)dt \geq \Phi '(x)x$.

Given a Young function $\Phi $ and a probability measure $\mu $, for any
$f \colon \mathbb{R}^{n} \to \mathbb{R}$ we set
\begin{equation*}
\| f \|_{\Phi } := \inf \left \{  \lambda >0: \int \Phi \left (
\frac{|f|}{\lambda } \right )d\mu {\leq 1} \right \}  \in [0,
\infty ]
\end{equation*}
with the convention that $\inf \emptyset = +\infty $. When useful we may
write $\| f \|_{\Phi ,\mu }$ to emphasize the underlying measure.

\subsection{Derivative of Luxembourg norm}
\label{sec2.2}

Here we give an explicit expression of the derivative with respect to time
of the following function $t \mapsto \| P_{t}f\|_{\Phi _{t}}$ which will
constitute the starting point of our investigations.

In the sequel we will use the following notations. Given a family of twice differentiable Young functions
$(\Phi _{t})_{t \geq 0}=(\Phi (t,x))_{t \geq 0}$, we denote by
$\dot{\Phi }_{t}$ the derivative with respect to $t$, and by
$\Phi _{t}'$ and $\Phi _{t}''$ the first and second order derivative with
respect to the second variable $x$.

Consider the inhomogeneous diffusion generator
$L_{t}:=\Delta - \nabla V_{t} \cdot \nabla $, $t \geq 0$, on
$\mathbb{R}^{n}$, with $V_{t}$ sufficiently smooth and such that
$\int e^{-V_{t}}dx=1$ so that $\mu _{t}(dx)=e^{-V_{t}(x)}dx$ is a probability
measure on $\mathbb{R}^{n}$ for all $t \geq 0$. Denote by
$(P^{(t)}_{s})_{s \geq 0}$ the associated semi-group. By construction
$L_{t}$ is symmetric in $\mathbb{L}_{2}(\mu _{t})$ and the following integration
by parts formula holds for any differentiable function
$\Psi \colon \mathbb{R} \to \mathbb{R}$, any
$f,g \colon \mathbb{R}^{n} \to \mathbb{R}$, such that
$\Psi (f), \nabla \Psi (f)\in \mathbb{L}_{2}(\mu _{t})$ and $g$ is in the
domain of $L_{t}$.
%
\begin{equation}
\label{eq:IBP}
\int \Psi (f) L_{t} g d\mu _{t} = - \int \Psi '(f) \nabla f \cdot
\nabla g d\mu _{t} .
\end{equation}
As we explain in Appendix~\ref{appA}, formally we have
\begin{equation*}
\partial _{t} P_{t}^{(t)}f = L_{t} f + {\mathcal{V}}_{t}f
\end{equation*}
 We prove the following differential property.

\begin{lemma}
\label{lem:technical}
Let $f \colon \mathbb{R}^{n} \to \mathbb{R}$ be a smooth bounded
function not equal to zero a.e. and $(\Phi _{t})_{t \geq 0}$ be a family
of $\mathcal{C}^{2}$ Young functions. Let
$N(t):= \| P_{t}^{(t)} f \|_{\Phi _{t}}$ and
$g:=\frac{P_{t}^{(t)}f}{N(t)}$, $t \geq 0$. Suppose
$\nabla V_{t} \cdot \nabla \dot{V}_{t} - \Delta \dot{V}_{t}= - L_{t}
\dot{V}_{t} $ is $\mu _{t}$-integrable. Then, it holds
\begin{align*}
N'(t) \int g \Phi _{t}'(g) d\mu _{t} &= N(t) \left (\int \dot{\Phi }_{t}(g)
d\mu _{t} - \int {\Phi _{t}''}(g) |\nabla g|^{2} d\mu _{t} -
\int \Phi _{t}(g)\dot{V_{t}}d\mu _{t} \right )
\\
& \quad + \int \int _{0}^{t} P_{t-s}^{(t)} f \nabla P_{s}^{(t)}(\Phi _{t}'(g))
\cdot \nabla \dot{V}_{t} dsd\mu _{t}
\\
& \quad - \int \left [ \nabla V_{t} \cdot \nabla \dot{V}_{t} -
\Delta \dot{V}_{t} \right ] \int _{0}^{t} P_{t-s}^{(t)} f P_{s}^{(t)} (
\Phi _{t}'(g)) ds d\mu _{t}
\end{align*}
In particular, when $V_{t}$ does not depend on $t$ (homogeneous case),
the latter reduces to
\begin{equation*}
N'(t) \int g \Phi _{t}'(g) d\mu = N(t) \left (\int \dot{\Phi }_{t}(g) d
\mu - \int \phi _{t}''(g) |\nabla g|^{2} d\mu \right ) .
\end{equation*}
\end{lemma}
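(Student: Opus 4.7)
The plan is to exploit the fact that, under the regularity of the family $(\Phi_t)$, the Luxembourg norm is attained at its defining infimum, so the constraint
\begin{equation*}
\int \Phi_t\!\left(\frac{P_t^{(t)} f}{N(t)}\right) d\mu_t = 1
\end{equation*}
holds identically in $t$. Writing $g = P_t^{(t)}f/N(t)$ and $h = P_t^{(t)} f$, differentiating this identity in $t$ (and remembering that $\mu_t(dx) = e^{-V_t(x)} dx$) yields
\begin{equation*}
0 = \int \dot\Phi_t(g)\, d\mu_t + \int \Phi_t'(g)\, \dot g\, d\mu_t - \int \Phi_t(g)\, \dot V_t\, d\mu_t,
\end{equation*}
with $\dot g = \dot h / N(t) - g\, N'(t)/N(t)$. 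Isolating $N'(t)$ reduces everything to the computation of $\int \Phi_t'(g)\, \dot h\, d\mu_t$.

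The key technical step, and the main obstacle, is to compute $\dot h$, since $t$ appears both as the time-parameter of the semigroup and as the parameter of the generator. I would split
\begin{equation*}
\dot h = \Big[\partial_s P_s^{(t)} f\Big]_{s=t} + \Big[\partial_\tau P_s^{(\tau)} f\Big]_{s=\tau=t} = L_t h + w_t,
\end{equation*}
where $w_s := \partial_\tau P_s^{(\tau)} f\big|_{\tau=t}$ solves the inhomogeneous equation $\partial_s w_s = L_t w_s - \nabla \dot V_t \cdot \nabla P_s^{(t)} f$ with $w_0 = 0$. Duhamel's formula then gives
\begin{equation*}
w_t = -\int_0^t P_{t-s}^{(t)}\!\left[\nabla \dot V_t \cdot \nabla P_s^{(t)} f\right] ds.
\end{equation*}
For the contribution from $L_t h$, apply the integration-by-parts identity \eqref{eq:IBP} with $\Psi = \Phi_t'$ to obtain $\int \Phi_t'(g) L_t h\, d\mu_t = -\int \Phi_t''(g) \nabla g \cdot \nabla h\, d\mu_t = -N(t)\int \Phi_t''(g) |\nabla g|^2 d\mu_t$, using that $\nabla h = N(t)\nabla g$.

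For the contribution from $w_t$, first use the symmetry of $P_{t-s}^{(t)}$ in $\mathbb{L}_2(\mu_t)$ to transfer it onto $\Phi_t'(g)$, then perform the substitution $s \mapsto t-s$, yielding an integrand of the form $P_s^{(t)}(\Phi_t'(g))\, \nabla \dot V_t \cdot \nabla P_{t-s}^{(t)} f$. A second integration by parts in the spatial variable (against $e^{-V_t} dx$) moves $\nabla$ off $P_{t-s}^{(t)} f$; the boundary terms produce exactly the factor $\Delta \dot V_t - \nabla V_t \cdot \nabla \dot V_t$, which is the $\mu_t$-divergence correction. Combining everything yields the two Duhamel terms of the statement. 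The homogeneous reduction is immediate: when $V_t$ is $t$-independent, $\dot V_t \equiv 0$ annihilates both correction integrals, $w_t$ vanishes, and $\int \Phi_t(g) \dot V_t d\mu_t = 0$, leaving only the three clean terms on the right-hand side.
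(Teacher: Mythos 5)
Your proposal is correct and follows essentially the same route as the paper: differentiate the constraint $\int \Phi_t(g)\,d\mu_t = 1$, isolate $N'(t)$, split $\dot h$ into the generator contribution and the parameter-variation contribution, handle the former by \eqref{eq:IBP}, and handle the latter by a Duhamel-type representation followed by reversibility and a second integration by parts with $\nabla^* = -\nabla + \nabla V_t$. The only cosmetic difference is in how the cross-derivative term $\partial_\tau P_s^{(\tau)} f|_{\tau=t}$ is obtained: you identify the inhomogeneous ODE $\partial_s w_s = L_t w_s - \nabla\dot V_t\cdot\nabla P_s^{(t)}f$, $w_0 = 0$, and invoke the abstract Duhamel formula, while the paper writes out the explicit telescoping identity $P_t^{(t+\varepsilon)}f - P_t^{(t)}f = \int_0^t P_s^{(t+\varepsilon)}\bigl([L_{t+\varepsilon}-L_t]P_{t-s}^{(t)}f\bigr)\,ds$ and takes $\varepsilon \to 0$; after your change of variable $s \mapsto t-s$ the two expressions coincide. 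Your version is arguably a hair more conceptual (no $\varepsilon$-limit argument) but proves exactly the same identity by exactly the same underlying mechanism.
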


\begin{remark}
We assumed $\mathcal{C}^{2}$ for Young functions for simplicity. Most of
the results in this paper can easily be understood for any Young function
using the notion of second order derivative in the sense of Aleksandrov.
\end{remark}

\begin{proof}
Let $f \colon \mathbb{R}^{n} \to \mathbb{R}$ be a smooth bounded
function not equal to zero a.e. From the  definition
of the Luxembourg norm, we observe that for any $t \geq 0$,
$\int \Phi _{t} \left ( \frac{P_{t}^{(t)}f}{N(t)}\right ) d\mu _{t} =1$.
Therefore, taking the derivative, we get
\begin{align*}
\int \dot{\Phi }_{t}(g) d\mu _{t} + \int \Phi _{t}'(g) \frac{d}{dt}
\left ( \frac{P_{t}^{(t)}f}{N(t)} \right ) d\mu _{t} - \int \Phi _{t}(g)
\dot{V_{t}}d\mu _{t} = 0
\end{align*}
where as already mentioned the dot stands for the derivative with respect
to the variable $t$. Observe that,
\begin{equation*}
\frac{d}{dt} \left ( \frac{P_{t}^{(t)}f}{N(t)} \right ) =
\frac{\dot{P}_{t}^{(t)}f}{N(t)} + \frac{L_{t}P_{t}^{(t)}f}{N(t)} -
\frac{P_{t}^{(t)}f N'(t)}{N(t)^{2}} = \frac{\dot{P}_{t}^{(t)}f}{N(t)} +
L_{t} g - \frac{N'(t)}{N(t)}g
\end{equation*}
where we set
\begin{align*}
\dot{P}_{t}^{(t)}f & := \lim _{\varepsilon \to 0}
\frac{P_{t}^{(t+\varepsilon )}f - P_{t}^{(t)}f}{\varepsilon } = \lim _{
\varepsilon \to 0} \frac{1}{\varepsilon }\int _{0}^{t} \frac{d}{ds}
\left ( P_{s}^{(t+\varepsilon )}(P_{t-s}^{(t)} f) \right )ds
\\
& = \lim _{\varepsilon \to 0} \frac{1}{\varepsilon } \int _{0}^{t} P_{s}^{(t+
\varepsilon )} \left ( [L_{t+\varepsilon } - L_{t}] P_{t-s}^{(t)} f
\right ) ds
\\
& = \int _{0}^{t} P_{s}^{(t)} \left ( - \nabla \dot{V}_{t} \cdot
\nabla P_{t-s}^{(t)} f\right ) ds .
\end{align*}
Therefore, using \eqref{eq:IBP}, we get
\begin{align*}
\int \Phi _{t}'(g) \frac{d}{dt} \left ( \frac{P_{t}^{(t)}f}{N(t)}
\right ) d\mu _{t} & = -\frac{1}{N(t)} \int \int _{0}^{t} \Phi _{t}'(g)
P_{s}^{(t)} \left ( \nabla \dot{V}_{t} \cdot \nabla P_{t-s}^{(t)} f
\right ) dsd\mu _{t}
\\
& \quad - \int {\Phi _{t}''}(g) |\nabla g|^{2} d\mu _{t} -
\frac{N'(t)}{N(t)} \int g\Phi _{t}'(g) d\mu _{t} .
\end{align*}
The previous computations lead to
\begin{align*}
\frac{N'(t)}{N(t)} \int g \Phi _{t}'(g) d\mu _{t} &= \int \dot{\Phi }_{t}(g)
d\mu _{t} - \int {\Phi _{t}''}(g) |\nabla g|^{2} d\mu _{t} -
\int \Phi _{t}(g)\dot{V_{t}}d\mu _{t}
\\
& \quad -\frac{1}{N(t)} \int \int _{0}^{t} \Phi _{t}'(g) P_{s}^{(t)}
\left ( \nabla \dot{V}_{t} \cdot \nabla P_{t-s}^{(t)} f\right ) dsd
\mu _{t}
\end{align*}
and we are left with the study of the last term on the right hand side
of the latter. By reversibility of the semi-group, we have
\begin{align*}
\int \Phi _{t}'(g) P_{s}^{(t)} \left ( \nabla \dot{V}_{t} \cdot
\nabla P_{t-s}^{(t)} f\right ) d\mu _{t} & = \int P_{s}^{(t)}(\Phi _{t}'(g))
\nabla \dot{V}_{t} \cdot \nabla P_{t-s}^{(t)} f d\mu _{t}
\\
& = \int P_{t-s}^{(t)} f {\nabla ^{*}_{t}} \left (P_{s}^{(t)}(
\Phi _{t}'(g)) \nabla \dot{V}_{t} \right )d\mu _{t}
\end{align*}
where ${\nabla ^{*}_{t}}$ is the adjoint of $\nabla $ in
$\mathbb{L}_{2}(\mu _{t})$, namely such that
$\int \! u \nabla v d\mu _{t} = \int \! v {\nabla ^{*}_{t}} u d
\mu _{t}$. One can see that
${\nabla ^{*}_{t}}=-{\mathrm{div}} + \nabla V_{t}$
where $\nabla V_{t}$ acts multiplicatively. Therefore,
\begin{align*}
\int \Phi _{t}'(g) P_{s}^{(t)} \left ( \nabla \dot{V}_{t} \cdot
\nabla P_{t-s}^{(t)} f\right ) d\mu _{t} & = - \int P_{t-s}^{(t)} f
\nabla P_{s}^{(t)}(\Phi _{t}'(g)) \cdot \nabla \dot{V}_{t} d\mu _{t}
\\
&  + \int P_{t-s}^{(t)} f P_{s}^{(t)} (\Phi _{t}'(g)) \left [
\nabla V_{t} \cdot \nabla \dot{V}_{t} - \Delta \dot{V}_{t} \right ] d
\mu _{t}
\end{align*}
From this the desired result follows.
\end{proof}

\subsection{Expansion of the square}
\label{sec2.3}

Here we may recall the \emph{expansion of the square} method or, as it is
called in \cite{HZ}, $U$-bounds. That is the bounds obtained by using Leibnitz
rule together with integration by parts as follows. Let
$U \colon \mathbb{R}^{n} \to \mathbb{R}$ be such that
$\int e^{-U} dx < \infty $. Then, for any differentiable function
$f \colon \mathbb{R}^{n} \to \mathbb{R}$, one has
%
\begin{equation}
\label{eq:Ubound}
\int f^{2}( |\nabla U|^{2} - 2\Delta U ) e^{-U} dx \leq 4 \int |
\nabla f|^{2} e^{-U} dx .
\end{equation}
In fact, expanding the square, one has
\begin{align*}
0 & \leq \int |\nabla (fe^{-U/2})|^{2} dx \\
&  = \int |\nabla f|^{2} e^{-U}dx
- \int f \nabla f \cdot \nabla U e^{-U}dx + \frac{1}{4}\int f^{2} |
\nabla U|^{2} e^{-U} dx .
\end{align*}
The expected inequality {\eqref{eq:Ubound}} then follows by applying an integration
by parts on the cross term.

The \emph{expansion of the square} method revealed to be very powerful.
It can be used for instance to prove Hardy's inequality with optimal constant
on $\mathbb{R}^{d}$, $d\geq 3$, or Poincar\'{e} inequality for the Gaussian
measure. We refer the interested reader to \cite{HZ,DV} for more results
and references.

\section{Hypercontractivity for homogeneous Markov semi-groups}
\label{sec:homogeneous}

In this section our aim is to introduce the notion of
\emph{standard Orlicz family} that will play a key role for proving the
equivalence between some functional inequality and a hypercontractivity
property along the corresponding family of Orlicz spaces. We need first
to analyze how to get a hypercontractivity property along
a general family of Orlicz spaces.

All along the section we set $L=\Delta - \nabla V \cdot \nabla $, with
$V$ smooth enough and such that $\mu (dx)=e^{-V}dx$ is a probability measure
on $\mathbb{R}^{n}$. We denote by $(P_{t})_{t \geq 0}$ the associated semi-group
which is reversible with respect to $\mu $. Orlicz spaces and their corresponding
Luxembourg norms are understood with respect to $\mu $.

\subsection{Hypercontractivity along Orlicz spaces}
\label{sec3.1}

Using Lemma~{\ref{lem:technical}} we first prove that hypercontractivity
is a direct and immediate consequence of some family of functional inequalities.
Our second result shows how that family can, under some assumptions, be
reduced to one single functional inequality of log-Sobolev-type.

\begin{proposition}
\label{prop:hyp}
Let $(\Phi _{t})_{t \geq 0}$ be a family of ${\mathcal{C}}^{2}$ Young functions.
Assume that for any $t \geq 0$, any sufficiently smooth function $f$, we
have
%
\begin{equation}
\label{eq:f}
\Vert f \Vert _{\Phi _{t}}^{2} \int \dot{\Phi }_{t} \left (
\frac{f}{\Vert f \Vert _{\Phi _{t}}} \right ) d \mu \leq \int \Phi _{t}''
\left ( \frac{f}{\Vert f \Vert _{\Phi _{t}}} \right ) |\nabla f|^{2} d
\mu .
\end{equation}
Then, for any $t \geq s$,
\begin{equation*}
\Vert P_{t} f \Vert _{\Phi _{t}} \leq \Vert P_{s} f \Vert _{\Phi _{s}}
.
\end{equation*}
\end{proposition}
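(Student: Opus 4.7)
The plan is to combine the hypothesis \eqref{eq:f} applied to $P_t f$ with the homogeneous version of Lemma~\ref{lem:technical}, and show that the right-hand side of that identity is non-positive.

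More concretely, set $N(t) := \|P_t f\|_{\Phi_t}$ and $g := P_t f/N(t)$. Lemma~\ref{lem:technical} (homogeneous case, $V_t \equiv V$) asserts
\[
N'(t) \int g\, \Phi_t'(g)\, d\mu
= N(t) \left( \int \dot{\Phi}_t(g)\, d\mu - \int \Phi_t''(g)\, |\nabla g|^2\, d\mu \right).
\]
Apply \eqref{eq:f} with $f$ replaced by $P_t f$: since $\|P_t f\|_{\Phi_t} = N(t)$ and $P_t f/N(t) = g$, while $|\nabla P_t f|^2 = N(t)^2 |\nabla g|^2$, the hypothesis rewrites, after dividing by $N(t)^2 > 0$, as
\[
\int \dot{\Phi}_t(g)\, d\mu \leq \int \Phi_t''(g)\, |\nabla g|^2\, d\mu.
\]
Consequently the parenthesis in the identity above is non-positive, i.e.\ $N'(t) \int g\,\Phi_t'(g)\,d\mu \leq 0$.

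It remains to check that $\int g\,\Phi_t'(g)\,d\mu > 0$ so that $N'(t) \leq 0$. By convexity of $\Phi_t$ combined with $\Phi_t(0)=0$, we have the pointwise bound $\Phi_t(x) \leq x \Phi_t'(x)$ (this is the first inequality in \eqref{eq:delta2}, and follows from convexity alone). Integrating and using the defining property $\int \Phi_t(g)\,d\mu = 1$ of the Luxembourg norm (which holds for the normalized function $g$, provided $f \not\equiv 0$), we obtain $\int g\,\Phi_t'(g)\,d\mu \geq 1 > 0$. Hence $N'(t) \leq 0$ for all $t \geq 0$, so $N$ is non-increasing and $\|P_t f\|_{\Phi_t} \leq \|P_s f\|_{\Phi_s}$ for $s \leq t$.

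The proof is therefore essentially a direct substitution. The only mild obstacle is the justification of $\int \Phi_t(g)\,d\mu = 1$ and of the regularity needed to apply Lemma~\ref{lem:technical} to $P_t f$; both are standard for smooth $f$ and Young functions satisfying mild growth conditions (e.g.\ $\Delta_2$), and allow one to approximate general $f$ by smooth compactly supported ones. The case $f \equiv 0$ is trivial.
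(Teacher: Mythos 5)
Your proof is correct and follows exactly the paper's strategy: substitute $P_t f$ into \eqref{eq:f}, invoke the homogeneous case of Lemma~\ref{lem:technical}, and read off the sign of $N'(t)$. The only difference is that you take the extra care to show $\int g\,\Phi_t'(g)\,d\mu \geq 1 > 0$ via the convexity bound $\Phi_t(x)\leq x\Phi_t'(x)$ and the normalization $\int\Phi_t(g)\,d\mu=1$, whereas the paper only remarks that $x\Phi_t'(x)\geq 0$ — your version closes a small gap left implicit there.
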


\begin{proof}
We need to prove is that
$N : t \mapsto \Vert P_{t} f \Vert _{\Phi _{t}}$ is non-increasing. Lemma~{\ref{lem:technical}} asserts that
\begin{align*}
\frac{N'(t)}{N(t)} \int g \Phi _{t}' \left ( g \right ) d\mu = \int
\dot{\Phi }_{t} \left (g\right ) d\mu - \int \Phi _{t}'' \left ( g
\right ) |\nabla g |^{2} d\mu
\end{align*}
where $g:=\frac{P_{t} f}{N(t)}$. Since for any $t \geq 0$,
$\Phi _{t}$ is a Young function, it satisfies
$x \Phi _{t}'(x) \geq 0$ for any $x \in \mathbb{R}$. It follows by {\eqref{eq:f}} that $N'(t) \leq 0$ which is the expected result.
\end{proof}

Using an isometry between $\mathbb{L}_{\Phi _{t}}$ and
$\mathbb{L}_{\Phi _{s}}$, we may translate {\eqref{eq:f}} for
$\Phi _{s}$ into a similar inequality for $\Phi _{t}$, therefore reducing
the family of inequalities {\eqref{eq:f}} possibly to a single one.

\begin{proposition}
\label{prop:move}
Let $(\Phi _{t})_{t \geq 0}$ be a family of ${\mathcal{C}}^{2}$ Young functions.
Assume that for some $t, s \geq 0$ there exist two positive constants
$C(t,s)$ and $\widetilde{C}(t,s)$ such that\\
$(i)$
\begin{equation*}
\dot{\Phi }_{t} (\Phi _{t}^{-1}) \leq C(t,s) \dot{\Phi }_{s} (\Phi _{s}^{-1})
,
\end{equation*}
$(ii)$
\begin{equation*}
\frac{\Phi _{t}''}{{\Phi _{t}'}^{2}} \circ \Phi _{t}^{-1} \geq
\widetilde{C}(t,s) \frac{\Phi _{s}''}{{\Phi _{s}'}^{2}} \circ \Phi _{s}^{-1}
.
\end{equation*}
Assume furthermore that for some constant $c>0$ and for any $f$ (smooth
enough), it holds
%
\begin{equation}
\label{eq:f1}
\Vert f \Vert _{\Phi _{s}}^{2} \int \dot{\Phi }_{s} \left (
\frac{f}{\Vert f \Vert _{\Phi _{s}}} \right ) d \mu \leq c \int \Phi _{s}''
\left ( \frac{f}{\Vert f \Vert _{\Phi _{s}}} \right ) |\nabla f|^{2} d
\mu .
\end{equation}
Then, for any $f$ smooth enough it holds
\begin{equation*}
\Vert f \Vert _{\Phi _{t}}^{2} \int \dot{\Phi }_{t} \left (
\frac{f}{\Vert f \Vert _{\Phi _{t}}} \right ) d \mu \leq c
\frac{C(t,s)}{\widetilde{C}(t,s)} \int \Phi _{t}'' \left (
\frac{f}{\Vert f \Vert _{\Phi _{t}}} \right ) |\nabla f|^{2} d\mu .
\end{equation*}
\end{proposition}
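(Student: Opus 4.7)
The plan is to transport the inequality \eqref{eq:f1} from the scale $\Phi_s$ to the scale $\Phi_t$ via a pointwise change of variable that acts as an isometry between the unit balls in $\L_{\Phi_s}$ and $\L_{\Phi_t}$. Given a smooth $f$, normalize by $g := f/\|f\|_{\Phi_t}$ so that $\int \Phi_t(g)\,d\mu = 1$, and set
$$
h := \mathrm{sign}(g)\, \Phi_s^{-1}\bigl(\Phi_t(|g|)\bigr).
$$
Since $\Phi_s$ and $\Phi_t$ are even, this ensures $\Phi_s(h) = \Phi_t(g)$ pointwise, hence $\int \Phi_s(h)\,d\mu = 1$ and $\|h\|_{\Phi_s} = 1$. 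Applying \eqref{eq:f1} to $h$ then reduces to
$$
\int \dot\Phi_s(h)\,d\mu \leq c \int \Phi_s''(h)\,|\nabla h|^2\,d\mu.
$$

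The next step is to rewrite both integrands in terms of $g$. Differentiating the pointwise identity $\Phi_s(h) = \Phi_t(g)$ yields $\Phi_s'(h)\nabla h = \Phi_t'(g)\nabla g$, and therefore
$$
\Phi_s''(h)\,|\nabla h|^2 = \frac{\Phi_s''(h)}{(\Phi_s'(h))^2}\,(\Phi_t'(g))^2\,|\nabla g|^2.
$$
Assumption $(ii)$, evaluated at $y = \Phi_t(|g|)$ so that $\Phi_t^{-1}(y) = |g|$ and $\Phi_s^{-1}(y) = |h|$, and using that $\Phi''/(\Phi')^2$ is even (as $\Phi$ is), translates into the pointwise inequality
$$
\frac{\Phi_s''(h)}{(\Phi_s'(h))^2} \leq \frac{1}{\widetilde C(t,s)}\,\frac{\Phi_t''(g)}{(\Phi_t'(g))^2},
$$
which multiplied by $(\Phi_t'(g))^2 |\nabla g|^2$ gives $\Phi_s''(h)|\nabla h|^2 \leq \widetilde C(t,s)^{-1}\Phi_t''(g)|\nabla g|^2$. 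Similarly, assumption $(i)$ at the same $y$ produces the pointwise bound $\dot\Phi_t(g) \leq C(t,s)\dot\Phi_s(h)$ (using again evenness of $\dot\Phi_t$ and $\dot\Phi_s$).

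Chaining these three inequalities gives
$$
\int \dot\Phi_t(g)\,d\mu \leq C(t,s)\int \dot\Phi_s(h)\,d\mu \leq c\,\frac{C(t,s)}{\widetilde C(t,s)}\int \Phi_t''(g)\,|\nabla g|^2\,d\mu,
$$
and multiplying through by $\|f\|_{\Phi_t}^2$ while using $|\nabla g|^2 = |\nabla f|^2/\|f\|_{\Phi_t}^2$ delivers the announced inequality.

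The main obstacle will be regularity of the transported function $h$: since $\Phi_s'(0) = 0$, the map $x \mapsto \mathrm{sign}(x)\,\Phi_s^{-1}(\Phi_t(|x|))$ need not be $\mathcal{C}^1$ at the origin, so $h$ may fail to lie in the natural class of test functions admissible in \eqref{eq:f1}. This is a standard technical issue and can be handled by routine approximation, for instance by applying the argument to $\Phi_t + \varepsilon \Phi_s$ and letting $\varepsilon \downarrow 0$, or by truncating away from $\{g = 0\}$ and passing to the limit, observing that both integrands in the transported identity vanish where $g = 0$.
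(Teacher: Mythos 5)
Your proposal is correct and follows the same approach as the paper: you construct the same pointwise transport $\Phi_s^{-1}\circ\Phi_t$ (the paper's isometry $I_{s,t}$, which equals $\|f\|_{\Phi_t}\,h$ in your notation), apply \eqref{eq:f1} to the transported function, differentiate the pointwise identity $\Phi_s(h)=\Phi_t(g)$, and conclude via assumptions $(i)$ and $(ii)$. You are slightly more careful than the paper about the $\mathrm{sign}(g)$ factor needed when $f$ changes sign and about the possible loss of $\mathcal{C}^1$ regularity of $h$ at $\{g=0\}$, both of which the paper passes over silently; these are welcome refinements but do not change the nature of the argument.
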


\begin{proof}
Let
\begin{equation*}
\begin{array}{r@{\quad }c@{\quad }l}
I_{s,t} : \mathbb{L}_{\Phi _{t}} & \rightarrow & \mathbb{L}_{\Phi _{s}}
\\
f & \mapsto & \Vert f \Vert _{\Phi _{t}} \Phi _{s}^{-1} \circ \Phi _{t}
\left ( \frac{f}{\Vert f \Vert _{\Phi _{t}}} \right ) .
\end{array}
\end{equation*}
For any $f \in \mathbb{L}_{\Phi _{t}}$, by the very definition of the Luxembourg
norm, it holds
$\Vert I_{s,t}(f)\Vert _{\Phi _{s}}=\Vert f\Vert _{\Phi _{t}}$. Therefore,
$I_{s,t}(f)$ is an isometry between the two Orlicz spaces
$\mathbb{L}_{\Phi _{t}}$ and $\mathbb{L}_{\Phi _{s}}$. Applying {\eqref{eq:f1}} to $I_{s,t}(f)$ leads to
\begin{eqnarray*}[ll]
 \Vert f \Vert _{\Phi _{t}}^{2} \int \dot{\Phi }_{s} \left (\Phi _{s}^{-1}
\circ \Phi _{t} \Big ( \frac{f}{\Vert f \Vert _{\Phi _{t}}} \Big )
\right ) d \mu \leq
\\
 \qquad \qquad \qquad \qquad c \int \Phi _{s}'' \circ \Phi _{s}^{-1}
\circ \Phi _{t} \left ( \frac{f}{\Vert f \Vert _{\Phi _{t}}} \right )
\frac{\Phi _{t}'\left (\frac{f}{\Vert f\Vert _{\Phi _{t}}}\right )^{2}
|\nabla f|^{2} }{\Phi _{s}' \circ \Phi _{s}^{-1}\circ \Phi _{t}
\left (\frac{f}{\Vert f\Vert _{\Phi _{t}}}\right )^{2}} d\mu .
\end{eqnarray*}
The result follows by $(i)$ and $(ii)$.
\end{proof}

The simplest example is given by the $\mathbb{L}_{p}$ scale
$\Phi _{t}(x)=|x|^{q(t)}$ for some function $q$ that we assume to be increasing.
Then, it holds
\begin{equation*}
\dot{\Phi }_{t} (\Phi _{t}^{-1}) = \frac{q'(t)}{q(t)} x \log x \qquad
\text{ and } \qquad \frac{\Phi _{t}''}{{\Phi _{t}'}^{2}} \circ \Phi _{t}^{-1}
= \frac{q(t) - 1}{q(t)} \frac{1}{x}.
\end{equation*}
Therefore assumptions $(i)$ and $(ii)$ hold with
$C(t,s)=\frac{q'(t)q(s)}{q(t)q'(s)}$ and
$\widetilde{C}(t,s)= \frac{(q(t)-1)q(s)}{q(t)(q(s)-1)}$. In particular,
the choice $q(t) = 1 + e^{(4/\rho ) t}$, $\rho >0$, guarantees that
$C(t,s) = \widetilde{C}(t,s)$ for all $s,t$. Hence, the family of inequalities {\eqref{eq:f1}} are all equivalent to {\eqref{eq:f1}} with $s=0$, which reads
\begin{equation*}
\mathrm{Ent}_{\mu }(f^{2}) \leq \rho \int |\nabla f|^{2} d\mu
\end{equation*}
since $\Phi _{0}=|x|^{2}$ (and therefore
$\dot{\Phi }_{0}(x)= (2/\rho ) x^{2} \log x^{2}$ and
$\Phi _{0}''(x)=2$). This is the log-Sobolev inequality and therefore Proposition~{\ref{prop:move}} is just one direction in Gross' theorem \cite{gross}.

In the above example, both $\dot{\Phi }_{t} (\Phi _{t}^{-1})$ and
$\frac{\Phi _{t}''}{{\Phi _{t}'}^{2}} \circ \Phi _{t}^{-1}$ are of the
form $a(t)b(x)$. Based on this simple observation, we may construct a generic
family of Orlicz functions that, by construction, will automatically satisfies
assumptions $(i)$ and $(ii)$ of the latter. This is the object of the next
section.

\subsection{The standard Orlicz family}
\label{sec:standard}

We define a large class of family of $N$-functions that we will call the
\emph{standard Orlicz family}.

\begin{definition}[standard Orlicz family]
\label{def:sof}
Let $F : (0,\infty )\to \mathbb{R}$ be a $\mathcal{C}^{2}$ increasing
function with $F(1)=0$. Assume that
$(0,\infty ) \ni x \mapsto xF(x)$ is convex and that $1/xF(x)$ is not integrable
at $x=0$, $x=1$ and $x=+\infty $. Let
$\mathcal{F}_{1} : (0,1) \to \mathbb{R}$ and
$\mathcal{F}_{2} : (1,+\infty ) \to \mathbb{R}$ be two primitives of
$x \mapsto 1/(xF(x))$.

Let $\Phi _{0}$ be a nice Young function and $x_{o}$ the unique positive
point such that $\Phi _{0}(x_{o})=1$. We assume that
$- \left (\frac{\Phi _{0}}{\Phi _{0}'} \right )' F(\Phi _{0}) - \Phi _{0}
F'(\Phi _{0})$ is non-increasing on $\mathbb{R}^{+}$ and that
$\Phi _{0}$ is of class $\mathcal{C}^{2}$ on $(0,\infty )$.

Given an increasing function
$\lambda \colon [0,\infty ) \to [0,\infty )$, with $\lambda (0)=0$, we
define the family of functions $(\Phi _{t})_{t \geq 0}$ by
\begin{equation*}
\Phi _{t}(x) =
\begin{cases}
0 & \text{ for } x=0
\\
\mathcal{F}_{1}^{-1} \left ( \mathcal{F}_{1}(\Phi _{0}(x)) + \lambda (t)
\right ) & \text{ for } x \in (0,x_{o})
\\
1 & \text{ for } x = x_{o}
\\
\mathcal{F}_{2}^{-1} \left ( \mathcal{F}_{2}(\Phi _{0}(x)) + \lambda (t)
\right ) & \text{ for } x \in (x_{o},+\infty ) .
\end{cases}
\qquad \forall t>0 .
\end{equation*}

We shall call the family $(\Phi _{t})_{t \geq 0}$ the
\emph{standard Orlicz family built from $F$, $\Phi _{0}$ and $\lambda $}.
\end{definition}

\begin{remark}
The Lemma below will prove that all $\Phi _{t}$ are indeed Young functions
and in fact nice Young functions. This justifies the terminology ``Orlicz
family''. Also, it is not difficult to check that the definition above does
not depend on the choice of the primitives: any two different primitives
lead to the same final function $\Phi _{t}$.
\end{remark}

\begin{example}
\label{ex:sof}
As an example consider $F(x)=\log (x)$ and any nice Young function
$\Phi _{0}$. Then, $\mathcal{F}_{1}(x)=\log (\log (1/x))$,
$x \in (0,1)$ and $\mathcal{F}_{2}(x)=\log (\log (x))$, $x >1$ so that
$\mathcal{F}_{1}^{-1}(x)=e^{-e^{x}}$ and
$\mathcal{F}_{2}^{-1}(x)=e^{e^{x}}$, $x \in \mathbb{R}$. Hence,
$\Phi _{t}(x)=\Phi _{0}^{e^{\lambda (t)}}$. This corresponds to an
$\mathbb{L}_{p}$ scale when $\Phi _{0}(x)=|x|^{q}$ for some $q > 1$. More
specifically, if $q(t) = 1 + e^{(4/\rho ) t}$ and
$\lambda (t)= \log (q(t)/2)$, with $\Phi _{0}(x)=x^{2}$, we have
$\Phi _{t}(x)=|x|^{q(t)}$ and we are back to Gross' setting.

The more general choices $F(x)=\log (1+x)^{\beta }- \log (2)^{\beta }$,
$\beta \in (0,1)$, can also be considered, but lead to non explicit
$\mathcal{F}_{1}$ and $\mathcal{F}_{2}$. Although one can easily give an
asymptotic of the corresponding ${\Phi }_{t}(x)$, when $x$ tends
to $0$ or $+\infty $. For instance, $\Phi _{t}$ is equivalent to
$\Phi _{0} e^{a_{\beta }\lambda (\log \phi _{0})^{\beta }}$ when $x$ tends to
infinity, where $a_{\beta }$ is a numerical constant that depends only on
$\beta $. This amounts to the family of Young functions
$x^{2} e^{ct F(x)}$ considered in
\cite[Section 7]{barthe-cattiaux-roberto}.
\end{example}

In the next lemma we collect some property of the standard Orlicz families.

\begin{lemma}
\label{lem:sof}
Let $F$, $\Phi _{0}$ and $\lambda $ satisfying the assumptions of Definition~{\ref{def:sof}} and let $(\Phi _{t})_{t \geq 0}$ be the standard Orlicz family
built from $F$, $\Phi _{0}$ and $\lambda $. Then,%
\xmlpar $(i)$ $\mathcal{F}_{1}$ and $\mathcal{F}_{2}$ are $\mathcal{C}^{2}$ functions
respectively on $(0,1)$ and $(1,+\infty )$. $\mathcal{F}_{1}$ is decreasing
with
$\lim _{x \to 0} \mathcal{F}_{1}(x) =-\lim _{x \to 1} \mathcal{F}_{1}(x)
= +\infty $. While $\mathcal{F}_{2}$ is increasing with
$\lim _{x \to 1} \mathcal{F}_{2}(x) =-\lim _{x \to +\infty }
\mathcal{F}_{2} (x) = -\infty $. In particular $\Phi _{t}$ is well defined
and continuous. Moreover, for $t \geq s$, $\Phi _{t} \leq \Phi _{s}$ on
$(0,x_{0})$ and $\Phi _{t} \geq \Phi _{s}$ on $(x_{0},+\infty )$.%
\xmlpar $(ii)$ For any $t \geq 0$, $\Phi _{t}$ is a nice Young function of class
$\mathcal{C}^{2}$ on $(0,\infty )$ (with
$\Phi _{t}'(x_{o})=\Phi _{0}'(x_{o})$ and
$\Phi _{t}''(x_{o})=\Phi _{0}''(x_{o})$).
\xmlpar $(iii)$ For any $t \geq 0$,
$\dot{\Phi }_{t} \circ \Phi _{t}^{-1} = \lambda '(t) x F(x)$.%
\xmlpar $(iv)$ For any $t \geq s \geq 0$.
$\frac{\Phi _{t}''}{{\Phi _{t}'}^{2}}\circ \Phi _{t}^{-1} \geq
\frac{\Phi _{s}''}{{\Phi _{s}'}^{2}}\circ \Phi _{s}^{-1}$.%
\xmlpar $(v)$ Assume that $\lambda $ tends to infinity at infinity. Then for any
$f \in \mathbb{L}_{\infty }$, it holds
$\lim _{t \to +\infty }\Vert f \Vert _{\Phi _{t}} = \frac{1}{x_{0}}
\Vert f \Vert _{\infty }$.
\end{lemma}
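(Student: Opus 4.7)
The proof naturally splits into items (i)--(v), unified by the observation that differentiating the defining identity $\mathcal{F}_i(\Phi_t(x))=\mathcal{F}_i(\Phi_0(x))+\lambda(t)$ in $t$ yields the evolution equation
\[
\dot\Phi_t(x)=\lambda'(t)\,\Phi_t(x)F(\Phi_t(x))=\lambda'(t)\,G(\Phi_t(x)),\qquad G(y):=yF(y),
\]
which proves (iii) immediately (set $y=\Phi_t(x)$) and realizes $\Phi_t(\cdot)$ as the time-$\lambda(t)$ flow of the $\mathcal{C}^2$ vector field $G$, whose zeros are exactly $0$ and $1$ (so $\Phi_t(x_0)=1$ for every $t$).

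Item (i) is read off from the hypotheses on $F$: the sign and $\mathcal{C}^1$-regularity of $1/(xF(x))$ on each of $(0,1)$ and $(1,+\infty)$ give the asserted monotonicities and $\mathcal{C}^2$-regularity of $\mathcal{F}_1,\mathcal{F}_2$, non-integrability at $0,1,+\infty$ produces the stated infinite limits, and continuity of $\Phi_t$ at $x_0$ and $0$ together with the comparisons $\Phi_t\le\Phi_s$ on $(0,x_0)$ and $\Phi_t\ge\Phi_s$ on $(x_0,+\infty)$ for $t\ge s$ follow because $\mathcal{F}_1^{-1}$ is decreasing, $\mathcal{F}_2^{-1}$ is increasing, and $\lambda$ is non-decreasing. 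For (ii), evenness is inherited from $\Phi_0$. Differentiating the evolution equation twice in $x$ yields the linear ODEs $\dot{\Phi_t'}=\lambda'(t)G'(\Phi_t)\Phi_t'$ and $\dot{\Phi_t''}=\lambda'(t)[G'(\Phi_t)\Phi_t''+G''(\Phi_t)(\Phi_t')^2]$; Duhamel's formula then propagates positivity of $\Phi_t'$ and, via $G''\ge 0$ (convexity of $xF(x)$) and $\Phi_0''\ge 0$, positivity of $\Phi_t''$, so each $\Phi_t$ is strictly increasing and convex. Global $\mathcal{C}^2$-regularity in $x$ across $x_0$ is the standard smooth dependence of the flow of a $\mathcal{C}^2$ vector field on its initial data (parametrised here by $u(0)=\Phi_0(x)$), which in particular ensures that the one-sided derivatives at $x_0$ match. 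The ``nice'' growth conditions $\Phi_t(x)/x\to 0,+\infty$ at $0,+\infty$ follow from the monotonicity of (i) applied to the corresponding properties of $\Phi_0$.

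The crux of the proof is (iv). Define $h(t,y):=[\Phi_t''/(\Phi_t')^2]\circ\Phi_t^{-1}(y)$; combining the ODEs for $\Phi_t'$ and $\Phi_t''$ with $\partial_t\Phi_t^{-1}(y)=-\lambda'(t)G(y)/\Phi_t'(\Phi_t^{-1}(y))$ yields the transport equation
\[
\partial_t h(t,y)=\lambda'(t)\bigl[G''(y)-G'(y)h(t,y)-G(y)\partial_y h(t,y)\bigr].
\]
Introducing the auxiliary function $\Psi(t,y):=G(y)h(t,y)-G'(y)$, the algebraic identity $G\partial_y h=\partial_y\Psi-G'h+G''$ reduces this to the coupled first-order system
\[
\partial_t h=-\lambda'(t)\,\partial_y\Psi,\qquad \partial_t\Psi+\lambda'(t)G(y)\partial_y\Psi=0.
\]
Thus $\Psi$ is constant along the characteristics $dy/dt=\lambda'(t)G(y)$, which are exactly the flow lines $y(t)=\Phi_t(\Phi_0^{-1}(y_0))$ of (iii); hence $\Psi(t,y)=\Psi(0,\Phi_0\circ\Phi_t^{-1}(y))$. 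Since $\Phi_0\circ\Phi_t^{-1}$ is strictly increasing, $\partial_y\Psi(t,\cdot)$ has the same sign as $\partial_y\Psi(0,\cdot)$. A direct substitution using $(\Phi_0/\Phi_0')'(x)=1-\Phi_0(x)h_0(\Phi_0(x))$, where $h_0:=h(0,\cdot)$, identifies the hypothesis that $-(\Phi_0/\Phi_0')'F(\Phi_0)-\Phi_0 F'(\Phi_0)$ is non-increasing on $\mathbb{R}^+$ with exactly $\partial_y\Psi(0,\cdot)\le 0$; hence $\partial_y\Psi(t,\cdot)\le 0$ for all $t\ge 0$ and $\partial_t h=-\lambda'(t)\partial_y\Psi\ge 0$, which is (iv). Identifying the invariant $\Psi$ and the associated transport equation is the main obstacle; everything else is bookkeeping.

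Item (v) is then soft. The monotonicity of (i) and $\lambda(t)\to+\infty$ give $\Phi_t(x)\to 0$ on $(0,x_0)$ and $\Phi_t(x)\to+\infty$ on $(x_0,+\infty)$ pointwise. For $\lambda>\|f\|_\infty/x_0$ one has $|f|/\lambda<x_0$ almost everywhere, so dominated convergence against the constant envelope $\Phi_0(\|f\|_\infty/\lambda)$ gives $\int\Phi_t(|f|/\lambda)\,d\mu\to 0$, whence $\|f\|_{\Phi_t}\le\lambda$ for $t$ large. For $\lambda<\|f\|_\infty/x_0$ there exists $\varepsilon>0$ with $\mu(\{|f|/\lambda>x_0+\varepsilon\})>0$; on this set $\Phi_t(|f|/\lambda)\ge\Phi_t(x_0+\varepsilon)\to+\infty$, forcing $\int\Phi_t(|f|/\lambda)\,d\mu\to+\infty$ and hence $\|f\|_{\Phi_t}\ge\lambda$ eventually. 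Letting $\lambda$ approach $\|f\|_\infty/x_0$ from both sides yields the claim.
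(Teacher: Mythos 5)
Your proof is correct, and for the two nontrivial items it takes a genuinely different route from the paper. The paper proves (ii) and (iv) by writing out $\Phi_t''/(\Phi_t')^2\circ\Phi_t^{-1}$ explicitly in terms of $\mathcal{F}$ and $\Phi_0$, then obtaining convexity via a first-order Taylor expansion of $(1/\mathcal{F}')'$ (together with convexity of $1/\mathcal{F}'=G$) and obtaining (iv) by combining the monotonicity of the bracketed term — exactly $-(\Phi_0/\Phi_0')'F(\Phi_0)-\Phi_0 F'(\Phi_0)$ — with the ordering $\Phi_t\lessgtr\Phi_s$ and the sign of $\mathcal{F}'$ on each piece. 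Your approach instead treats the family $\Phi_t$ as the flow of the vector field $G(y)=yF(y)$ reparametrised by $\lambda$: Duhamel propagates positivity of $\Phi_t'$ and $\Phi_t''$ (using $G''\ge 0$) to give (ii) and regularity follows from smooth dependence of the flow on initial data, while for (iv) you discover that $\Psi(t,y)=G(y)h(t,y)-G'(y)$ solves a pure transport equation and is therefore constant along characteristics, so that $\partial_t h=-\lambda'\partial_y\Psi\ge 0$ once the hypothesis is recognised as $\partial_y\Psi(0,\cdot)\le 0$. I checked the transport computation and the identification of the invariant with the paper's monotone expression, and both are correct: in $G$-notation the paper's explicit formula reads $h(t,y)=G'(y)/G(y)+\Psi(0,\Phi_0\circ\Phi_t^{-1}(y))/G(y)$, i.e.\ $\Psi(t,y)=\Psi(0,\Phi_0\circ\Phi_t^{-1}(y))$, which is exactly your constancy along characteristics. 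What your route buys is a conceptual explanation of the otherwise opaque monotonicity hypothesis (it is precisely the condition that the conserved quantity $\Psi$ is decreasing in $y$) and a cleaner argument for the $\mathcal{C}^2$-regularity across $x_0$, which the paper dismisses as "tedious but omitted"; what the paper's route buys is an entirely elementary argument that never mentions flows or characteristics. Items (i), (iii) and (v) are essentially the same in both.

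One small inefficiency in (v): for $\lambda>\|f\|_\infty/x_0$ you do not need dominated convergence — the pointwise bound $\Phi_t(|f|/\lambda)\le\Phi_t(x_0)=1$ already gives $\|f\|_{\Phi_t}\le\lambda$ for \emph{every} $t$, not just large $t$; indeed the sharper statement $\|f\|_{\Phi_t}\le\|f\|_\infty/x_0$ holds identically, as in the paper.
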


\begin{proof}
Points $(i)$ and $(iii)$ are simple consequences of the definitions of
the object involved.

It is not difficult but tedious to prove that for all $t \geq 0$,
$\Phi _{t}$ is $\mathcal{C}^{2}$ (we omit the proof). Using that
$\Phi _{t} \leq \Phi _{0}$ on $(0,x_{o})$ and since $\Phi _{0}$ is a nice
Young function we deduce that
$\lim _{x \to 0} \frac{\Phi _{t}(x)}{x}=0$. Similarly,
$\lim _{x \to \infty } \frac{\Phi _{t}(x)}{x}=+\infty $. In order to prove
that $\Phi _{t}$ is a nice Young function it therefore remains to show
that $\Phi _{t}$ is convex. For $x \neq x_{0}$, a simple differentiation
gives
%
\begin{align}
\label{eq:second}
\frac{\Phi _{t}''}{{\Phi _{t}'}^{2}} & = \mathcal{F}'(\Phi _{t})
\left ( -
\frac{\mathcal{F}''(\Phi _{t})}{\mathcal{F}'(\Phi _{t})^{2}} +
\frac{\mathcal{F}''(\Phi _{0})}{\mathcal{F}'(\Phi _{0})^{2}} +
\frac{\Phi _{0}''}{\mathcal{F}'(\Phi _{0}) {\Phi _{0}'}^{2}} \right )
\\
& = \mathcal{F}'(\Phi _{t}) \left ( \left ( \frac{1}{\mathcal{F}'}
\right )'(\Phi _{t}) - \left ( \frac{1}{\mathcal{F}'} \right )'(\Phi _{0})
+ \frac{\Phi _{0}''}{\mathcal{F}'(\Phi _{0}) {\Phi _{0}'}^{2}}
\right )
\nonumber
\end{align}
where $\mathcal{F}=\mathcal{F}_{1}$ when $x \in (0, x_{0})$ and
$\mathcal{F}=\mathcal{F}_{2}$ when $x > x_{0}$. A Taylor expansion of
$\left ( \frac{1}{\mathcal{F}'} \right )'$ at the first order insures that
\begin{equation*}
\frac{\Phi _{t}''}{{\Phi _{t}'}^{2}} = \mathcal{F}'(\Phi _{t})\left ( (
\Phi _{t} - \Phi _{0})\left ( \frac{1}{\mathcal{F}'} \right )'' (
\theta ) +
\frac{\Phi _{0}''}{\mathcal{F}'(\Phi _{0}) {\Phi _{0}'}^{2}} \right )
\end{equation*}
for some $\theta \in (\Phi _{t},\Phi _{0})$ when $x \in (0, x_{0})$ and
$\theta \in (\Phi _{0},\Phi _{t})$ when $x > x_{0}$. Since
$x \mapsto xF(x)$ is convex, $\frac{1}{\mathcal{F}'}$ is convex. It follows
that $\left ( \frac{1}{\mathcal{F}'} \right )'' (\theta ) \geq 0$ and thus
that
$(\Phi _{t} - \Phi _{0}) \left (\frac{1}{F}' \right )'' (\theta ) +
\frac{\Phi _{0}''}{\mathcal{F}'(\Phi _{0}) {\Phi _{0}'}^{2}}$ has the same
sign as $\mathcal{F}'(\Phi _{t})$. This proves that $\Phi _{t}$ is convex.

Next, we deal with Point $(iv)$. From {\eqref{eq:second}} we have with the
same notation as before
\begin{equation*}
\frac{\Phi _{t}''}{{\Phi _{t}'}^{2}} \circ \Phi _{t}^{-1}(x) = -
\frac{\mathcal{F}''(x)}{\mathcal{F}'(x)} + \mathcal{F}'(x) \left (
\frac{\mathcal{F}''(\Phi _{0})}{\mathcal{F}'(\Phi _{0})^{2}} +
\frac{\Phi _{0}''}{\mathcal{F}'(\Phi _{0}) {\Phi _{0}'}^{2}} \right )
\circ \Phi _{t}^{-1}(x) .
\end{equation*}
Note that by hypothesis,
\begin{equation*}
\frac{\mathcal{F}''(\Phi _{0})}{\mathcal{F}'(\Phi _{0})^{2}} +
\frac{\Phi _{0}''}{\mathcal{F}'(\Phi _{0}) {\Phi _{0}'}^{2}} = -
\left (\frac{\Phi _{0}}{\Phi _{0}'} \right )' F(\Phi _{0}) - \Phi _{0}
F'(\Phi _{0})
\end{equation*}
is non-increasing. Thus, by Point $(ii)$ and using the sign of
$\mathcal{F}'(x)$ on each domain $(0,x_{0})$ and $(x_{0},+\infty )$, we
have
\begin{eqnarray*}
\frac{\Phi _{t}''}{{\Phi _{t}'}^{2}} \circ \Phi _{t}^{-1}(x) & \geq & -
\frac{\mathcal{F}''(x)}{\mathcal{F}'(x)} + \mathcal{F}'(x) \left (
\frac{\mathcal{F}''(\Phi _{0})}{\mathcal{F}'(\Phi _{0})^{2}} +
\frac{\Phi _{0}''}{\mathcal{F}'(\Phi _{0}) {\Phi _{0}'}^{2}} \right )
\circ \Phi _{s}^{-1}(x)
\\
& = & \frac{\Phi _{s}''}{{\Phi _{s}'}^{2}} \circ \Phi _{s}^{-1}(x)
\end{eqnarray*}
which is the expected result.

Finally we will prove Point $(v)$. Let $f \in \mathbb{L}_{\infty }$. Then,
$\int \Phi _{t} \Big ( \frac{x_{0} |f|}{\Vert f \Vert _{\infty }} \Big ) d
\mu \leq \Phi _{t} (x_{0})=1$. Hence, by definition of the norm,
$\Vert f \Vert _{\Phi _{t}} \leq \frac{1}{x_{0}} \Vert f \Vert _{\infty }$. In order to prove the bound from below, fix
$\varepsilon >0$ small enough. Then note that for any $x>x_{0}$,
$\lim _{t \to +\infty } \Phi _{t}(x) = +\infty $. Thus
\begin{align*}
\int \Phi _{t} \left (
\frac{|f| x_{0}}{\Vert f \Vert _{\infty }(1-\varepsilon )} \right ) d
\mu & \geq \int _{\{|f| \geq \Vert f \Vert _{\infty }(1-
\frac{\varepsilon }{2}) \}} \Phi _{t} \left (
\frac{|f| x_{0}}{\Vert f \Vert _{\infty }(1-\varepsilon )} \right ) d
\mu
\\
& \geq \Phi _{t} \left ( x_{0}\Big (1+
\frac{\varepsilon }{2(1-\varepsilon )}\Big ) \right ) \mu \left ( \{|f|
\geq \Vert f \Vert _{\infty }(1-\frac{\varepsilon }{2}) \} \right ) \\
& \geq 1
\end{align*}
provided $t$ is large enough. It follows that
$\frac{1}{x_{0}}\Vert f \Vert _{\infty }(1-\varepsilon ) \leq \Vert f
\Vert _{\Phi _{t}}$ for $t$ large enough. This leads to the expected result
and achieves the proof of the lemma.
\end{proof}

\begin{remark}
When $\lambda (t)=\alpha t$ for some $\alpha >0$, the standard Orlicz family
enjoy a shift type property. Indeed, in that case
$\Phi _{t} = \mathcal{F}_{i}^{-1}(\mathcal{F}_{i}(\Phi _{s}) +
\lambda (t-s))$, $i=1, 2$. Therefore, the standard Orlicz families
$(\Phi _{t})_{t \geq 0}$ built from $\Phi _{0}$, $F$ and $\lambda $ and
$(\Psi _{t})_{t \geq 0}$ built from $\Phi _{s}$, $F$ and $\lambda $, satisfy
$\Psi _{t} = \Phi _{t+s}$ for any $t \geq 0$.
\end{remark}

\begin{remark}
\label{rem:x2}
When $\Phi _{0}(x)=x^{2}$,
\begin{equation*}
- \left (\frac{\Phi _{0}}{\Phi _{0}'} \right )' F(\Phi _{0}) - \Phi _{0}
F'(\Phi _{0}) = - \frac{1}{2} F(x^{2}) - x^{2} F'(x^{2}) .
\end{equation*}
Thus, this function is non-increasing if and only if
$\frac{3}{2} F'(x)+ xF''(x) \geq 0$ if and only if
$x \mapsto xF(x^{2})$ is convex. Thus, in that case, one can only assume
that $x \mapsto xF(x^{2})$ is convex (which implies that
$x \mapsto xF(x)$ is convex).
\end{remark}

\subsection{Gross-Orlicz' theorem}
\label{sec3.3}

Thanks to the above definition of the standard Orlicz family, we can state
one of our main results which generalizes Gross's theorem.

\begin{theorem}[Gross-Orlicz]
\label{th:hyp}
Let $(\Phi _{t})_{t \geq 0}$ be a standard Orlicz family built from
$F$, $\Phi _{0}$ and $\lambda $ satisfying the hypotheses of Lemma~{\ref{lem:sof}}. Let $c>0$. Then the following are equivalent%
\xmlpar $(i)$
%
\begin{equation}
\label{eq:FsobGross}
\Vert f \Vert _{\Phi _{0}}^{2} \int \Phi _{0} \Big (
\frac{f}{\Vert f \Vert _{\Phi _{0}}} \Big ) F \left ( \Phi _{0} \Big (
\frac{f}{\Vert f \Vert _{\Phi _{0}}} \Big ) \right ) d \mu \leq c
\int \Phi _{0}'' \left ( \frac{f}{\Vert f \Vert _{\Phi _{0}}} \right )
|\nabla f|^{2} d\mu ;
\end{equation}
for any function $f$ for which the right hand side is well defined;%
\xmlpar $(ii)$ $t \geq s \geq 0$, it holds
\begin{equation*}
\Vert P_{t} f \Vert _{\Phi _{t}} \leq \Vert P_{s} f \Vert _{\Phi _{s}},
\end{equation*}
for any function $f$ for which the right hand side is well defined.%

Moreover $(i) \Rightarrow (ii)$ with any (increasing) $\lambda $ such that
$\frac{\Phi _{t}''}{{\Phi _{t}'}^{2}} \circ \Phi _{t}^{-1} \geq c
\lambda '(t) \frac{\Phi _{0}''}{{\Phi _{0}'}^{2}}\circ \Phi _{0}^{-1}$
for any $t \geq 0$ (in particular, any $\lambda $ satisfying
$\lambda '(t) \leq 1/c$ would do); and $(ii) \Rightarrow (i)$ with
$c= 1/\lambda '(0)$.
\end{theorem}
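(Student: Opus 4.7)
Both directions reduce, via Lemma \ref{lem:technical} (in its homogeneous specialization) combined with the explicit identities of Lemma \ref{lem:sof}(iii) and (iv), to the chain Proposition \ref{prop:hyp}$\leftarrow$Proposition \ref{prop:move}$\leftarrow$\eqref{eq:FsobGross}. The key observation is that for a standard Orlicz family the inequality \eqref{eq:FsobGross} is nothing but the inequality \eqref{eq:f1} at $s=0$ once one substitutes the explicit form of $\dot{\Phi}_0$.

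\textbf{Proof of $(i)\Rightarrow(ii)$.} Applying Lemma \ref{lem:sof}(iii) at $t=0$ gives $\dot{\Phi}_0(y) = \lambda'(0)\,\Phi_0(y)\,F(\Phi_0(y))$, so \eqref{eq:FsobGross} is exactly \eqref{eq:f1} at $s=0$ with constant $c\lambda'(0)$. I would then verify the hypotheses of Proposition \ref{prop:move} with $s=0$: by Lemma \ref{lem:sof}(iii), $\dot{\Phi}_t\circ\Phi_t^{-1}(x) = \lambda'(t)\,xF(x)$, so hypothesis $(i)$ holds with $C(t,0)=\lambda'(t)/\lambda'(0)$; and by the standing assumption $\frac{\Phi_t''}{(\Phi_t')^2}\circ\Phi_t^{-1} \geq c\lambda'(t)\,\frac{\Phi_0''}{(\Phi_0')^2}\circ\Phi_0^{-1}$, hypothesis $(ii)$ holds with $\widetilde C(t,0) = c\lambda'(t)$ (in the sufficient subcase $c\lambda'(t)\leq 1$ this is a direct consequence of Lemma \ref{lem:sof}(iv)). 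Proposition \ref{prop:move} then transfers \eqref{eq:f1} from $s=0$ to time $t$ with overall coefficient $c\lambda'(0)\cdot C(t,0)/\widetilde C(t,0) = 1$, which is precisely the hypothesis \eqref{eq:f} of Proposition \ref{prop:hyp}. That proposition then delivers $\|P_tf\|_{\Phi_t}\leq\|P_sf\|_{\Phi_s}$.

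\textbf{Proof of $(ii)\Rightarrow(i)$.} Set $N(t):=\|P_tf\|_{\Phi_t}$. Monotonicity of $N$ forces $N'(0^+)\leq 0$. Applying Lemma \ref{lem:technical} in the homogeneous case at $t=0$ (so $P_0f=f$, $g=f/N(0)$) gives
$$
N'(0)\int g\,\Phi_0'(g)\,d\mu \;=\; N(0)\left(\int \dot{\Phi}_0(g)\,d\mu - \int \Phi_0''(g)\,|\nabla g|^2\,d\mu\right).
$$
Since $x\Phi_0'(x)\geq 0$ for the Young function $\Phi_0$, the left-hand side is $\leq 0$, yielding $\int \dot{\Phi}_0(g)\,d\mu \leq \int \Phi_0''(g)\,|\nabla g|^2\,d\mu$. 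Substituting $\dot{\Phi}_0(g) = \lambda'(0)\,\Phi_0(g)\,F(\Phi_0(g))$ from Lemma \ref{lem:sof}(iii), and multiplying through by $N(0)^2$ (which absorbs the factor $|\nabla g|^2 = |\nabla f|^2/N(0)^2$), gives exactly \eqref{eq:FsobGross} with $c=1/\lambda'(0)$.

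\textbf{Main obstacle.} The hard computational work is already packaged in Lemma \ref{lem:technical} and Proposition \ref{prop:move}, so the obstacles are technical rather than conceptual: one must justify the right-differentiability of $t\mapsto N(t)$ at $0$ and ensure that all the integrals appearing in Lemma \ref{lem:technical} are finite for the admissible class of test functions (a regularity/decay condition on $f$). Beyond that, the only care required is bookkeeping of the constants $\lambda'(0)$, $\lambda'(t)$ and $\widetilde C(t,0)$, with the critical identification $\widetilde C(t,0)=c\lambda'(t)$ making the prefactor collapse to $1$.
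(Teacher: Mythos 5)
Your proof is correct and follows the paper's argument step by step: both directions hinge on the identity $\dot{\Phi}_0 = \lambda'(0)\,\Phi_0 F(\Phi_0)$ from Lemma \ref{lem:sof}(iii), with $(i)\Rightarrow(ii)$ obtained by applying Proposition \ref{prop:move} with $C(t,0)=\lambda'(t)/\lambda'(0)$ and $\widetilde C(t,0)=c\lambda'(t)$ and then invoking Proposition \ref{prop:hyp}, and $(ii)\Rightarrow(i)$ obtained by differentiating $N(t)=\|P_tf\|_{\Phi_t}$ at $t=0$ via Lemma \ref{lem:technical}. The bookkeeping of constants (the prefactor collapsing to $1$, and $c=1/\lambda'(0)$ in the converse) matches the paper exactly.
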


\begin{proof}
We first prove that $(i)$ implies $(ii)$. Item $(iv)$ of Lemma~{\ref{lem:sof}} guarantees that
$\frac{\Phi _{t}''}{{\Phi _{t}'}^{2}} \circ \Phi _{t}^{-1} \geq c
\lambda '(t) \frac{\Phi _{0}''}{{\Phi _{0}'}^{2}}\circ \Phi _{0}^{-1}$
with $\lambda (t)=t/c$. Hence, the set of functions $\lambda $, increasing,
satisfying
$\frac{\Phi _{t}''}{{\Phi _{t}'}^{2}} \circ \Phi _{t}^{-1} \geq c
\lambda '(t) \frac{\Phi _{0}''}{{\Phi _{0}'}^{2}}\circ \Phi _{0}^{-1}$
for any $t \geq 0$ is non empty and we may fix one of them.

Consider the standard Orlicz family $(\Phi _{t})_{t \geq 0}$ built from
$F$, $\Phi _{0}$ and $\lambda $.

Note that by definition of the standard Orlicz family,
$\dot{\Phi }_{0}=\lambda '(0) \Phi _{0} F(\Phi _{0})$. Thus Inequality {\eqref{eq:FsobGross}} reads as
\begin{equation*}
\Vert f \Vert _{\Phi _{0}}^{2} \int \dot{\Phi }_{0} \left (
\frac{f}{\Vert f \Vert _{\Phi _{0}}} \right ) d\mu \leq \lambda '(0) c
\int \Phi _{0}'' \left ( \frac{f}{\Vert f \Vert _{\Phi _{0}}} \right )
|\nabla f|^{2} d\mu .
\end{equation*}
From the properties proved in Lemma~{\ref{lem:sof}} we can apply Proposition~{\ref{prop:move}} with $C(t,0)=\frac{\lambda '(t)}{\lambda '(0)}$ and
$\widetilde{C}(t,0)=c \lambda '(t)$. We get that, for any $t \geq 0$, any
smooth function $f$ satisfies
\begin{align*}
\Vert f \Vert _{\Phi _{t}}^{2} \int\dot{\Phi }_{t} \left (
\frac{f}{\Vert f \Vert _{\Phi _{t}}} \right ) d \mu& \leq \lambda '(0)c
\frac{C(t,0)}{\widetilde{C}(t,0)} \int \Phi _{t}'' \left (
\frac{f}{\Vert f \Vert _{\Phi _{t}}} \right ) |\nabla f|^{2} d\mu \\
&=
\int \Phi _{t}'' \left ( \frac{f}{\Vert f \Vert _{\Phi _{t}}}
\right ) |\nabla f|^{2} d\mu .
\end{align*}
The result of Point $(ii)$ follows by Proposition~{\ref{prop:hyp}}.

Now we prove that $(ii)$ implies $(i)$. Let
$N(t)=\Vert P_{t} f \Vert _{\Phi _{t}}$. By Lemma~{\ref{lem:technical}} at
$t=0$, we infer that
\begin{align*}
\frac{N'(0)}{N(0)} \int \frac{f}{\Vert f \Vert _{\Phi _{0}}} \Phi _{0}'
\left (\frac{f}{\Vert f \Vert _{\Phi _{0}}} \right ) d \mu
&= \int
\dot{\Phi }_{0} \left (\frac{f}{\Vert f \Vert _{\Phi _{0}}} \right ) d
\mu \\
&\quad {}- \int \Phi _{0}'' \left ( \frac{f}{\Vert f \Vert _{\Phi _{0}}}
\right ) |\nabla \frac{f}{\Vert f \Vert _{\Phi _{0}}} |^{2} d\mu .
\end{align*}
The hypercontractivity property of Point $(ii)$ insures that
$N'(0) \leq 0$. Thus, since $x \Phi _{0}'(x) \geq 0$, we get that
\begin{equation*}
\Vert f \Vert _{\Phi _{0}}^{2} \int \dot{\Phi }_{0} \left (
\frac{f}{\Vert f \Vert _{\Phi _{0}}} \right ) d\mu \leq \int \Phi _{0}''
\left ( \frac{f}{\Vert f \Vert _{\Phi _{0}}} \right ) |\nabla f |^{2} d
\mu .
\end{equation*}
The result follows by the formula
$\dot{\Phi }_{0}=\lambda '(0) \Phi _{0} F(\Phi _{0})$ proved in Lemma~{\ref{lem:sof}}.
\end{proof}

\begin{remark}
\label{rem:go}
When $\Phi _{0}(x)=x^{2}$, Inequality {\eqref{eq:FsobGross}} reads as
\begin{equation*}
\int f^{2} F \left ( \frac{f^{2}}{\mu (f^{2})} \right ) d\mu \leq 2c
\int |\nabla f|^{2} d\mu .
\end{equation*}
This is the usual $F$-Sobolev inequality introduced by Rosen
\cite{Rosen} (see also Wang \cite{W00}), and corresponds to the log-Sobolev
inequality when $F(x)=\log x$.

When $F = \log $, one can consider
$\lambda (t)=\log (1+e^{(4/\rho ) t}) - \log 2$, with $\rho =2c$. Then,
as already mentioned in Example~{\ref{ex:sof}}, the standard Orlicz family
built from $\Phi _{0}(x)=x^{2}$, $F$ and $\lambda $, is
$\Phi _{t}(x)=|x|^{q(t)}$ with $q(t)=1+e^{(4/\rho ) t}$. In that case {\ref{th:hyp}} is nothing but Gross' equivalence between the log-Sobolev
inequality and the hypercontractivity in $\mathbb{L}_{p}$ scale recalled
in the introduction.

{\ref{th:hyp}} has to be compared to
\cite[Theorem 6]{barthe-cattiaux-roberto}. When $F=\log $,
\cite[Theorem 6]{barthe-cattiaux-roberto} asserts that
$\Vert P_{t} f \Vert _{\widetilde{q}(t)} \leq \Vert f \Vert _{2}$ with
$\widetilde{q}(t)=2e^{\rho t}$ which is off by a factor of $2$ in the exponential
(though capturing the exponential character of the $\mathbb{L}_{p}$ scale).

Furthermore, for $F(x):=\log (1+x)^{\beta }- \log (2)^{\beta }$,
$\beta \in (0,1)$,
\cite[Theorem 6 and Corollary 34]{barthe-cattiaux-roberto} does not give
an hypercontractivity property, but only hyper-boundedness (see section~\ref{sec:hb} below for more on hyper-boundedness). One of the main difference
comes from the fact that in \cite{barthe-cattiaux-roberto} the authors
deals with an explicit family of Young functions which imposes in some
situation stronger assumptions. This happens for the second assumption
of \cite[Theorem 6]{barthe-cattiaux-roberto} which reads in our setting
as
$\Phi _{t}(x)F(x^{2}) \leq \ell (t) \Phi _{t}( F(\Phi _{t}(x))) + m$. We
do not need such an assumption here.

To conclude with the comparison between the two theorems, we observe that
the first assumption of \cite[Theorem 6]{barthe-cattiaux-roberto} is implied
by $x \mapsto xF(x^{2})$ convex, see Remark~{\ref{rem:x2}} above and
\cite[Proposition 7]{barthe-cattiaux-roberto}.

Notice that, for the following smooth version of $|x|^{\alpha }$,
$\alpha \in (1,2)$,
\begin{equation*}
u_{\alpha }(x)=\left \{
\begin{array}{l@{\quad }l}
|x|^{\alpha }& \text{for } |x| >1
\\
\frac{\alpha (\alpha -2)}{8} x^{4} + \frac{\alpha (4-\alpha )}{4}x^{2}
+ (1-\frac{3}{4} \alpha + \frac{1}{8} \alpha ^{2}) & \text{for } |x| \leq 1
\end{array}
\right .
\end{equation*}
it has been proved in \cite[Proposition 33]{barthe-cattiaux-roberto} that
the probability measure
$d\mu _{\alpha }^{n}(x)=\prod _{i=1}^{n} Z_{\alpha }^{-1}e^{-u_{\alpha }(x_{i})}dx_{i}$
on $\mathbb{R}^{n}$ satisfies {\eqref{eq:FsobGross}} with
$F(x)=\log (1+x)^{\beta }- \log (2)^{\beta }$ with
$\beta =2(1-\frac{1}{\alpha })$, $\Phi _{0}(x)=x^{2}$ and some constant
$c=c(\alpha )>0$ (that does not depend on $n$). This in turn leads to an
hypercontractivity property for the standard Orlicz family built from
$F$, $\Phi _{0}$ and any $\lambda $ satisfying
$\lambda '(t) \leq 1/c$.
\end{remark}

\subsection{Perturbation of Orlicz families and hypercontractivity}
\label{sec3.4}

In this next section we show how to translate the hypercontractivity property
from one family of Young functions to another.

\begin{proposition}
Let $(\Psi _{t})_{t \geq 0}$ and $(\Phi _{t})_{t \geq 0}$ be two families
of Young functions and $(P_{t})_{t \geq 0}$ be a linear semi-group acting
on a set of functions $\mathcal{A}$ onto itself. Assume that for some
$t \geq 0$,%
\xmlpar $(i)$ any $f \in {\mathcal{A}}$ satisfies
$\Vert P_{t} f \Vert _{\Psi _{t}} \leq \Vert f \Vert _{\Psi _{0}}$,
\xmlpar $(ii)$ the function $\Psi _{t}^{-1} \circ \Phi _{t}$ is convex, satisfies
$\Psi _{t}^{-1} \circ \Phi _{t} \leq \Psi _{0}^{-1} \circ \Phi _{0}$ and
$\Psi _{t}^{-1} \circ \Phi _{t} (\mathcal{A}) \subset \mathcal{A}$,%
\xmlpar $(iii)$ for any function $f \in {\mathcal{A}}$, any convex function
$F$, $F(P_{t} f) \leq P_{t}(F(f))$.

Then, any $f \in {\mathcal{A}}$ satisfies
\begin{equation*}
\Vert P_{t} f \Vert _{\Phi _{t}} \leq \Vert f \Vert _{\Phi _{0}} .
\end{equation*}
\end{proposition}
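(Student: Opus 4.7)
The plan is to use the definition of the Luxembourg norm together with the factorization $\Phi_t = \Psi_t \circ (\Psi_t^{-1}\circ \Phi_t)$, which turns a contraction bound in the $\Psi$-scale into one in the $\Phi$-scale. By evenness of Young functions and since Markov semi-groups are positivity-preserving, I may assume throughout that $f \geq 0$ and work with pointwise inequalities between non-negative quantities.

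Fix $\lambda > \Vert f \Vert_{\Phi_0}$, so that by definition of the Luxembourg norm
\[
\int \Phi_0\!\left(\frac{f}{\lambda}\right) d\mu \leq 1.
\]
Set $h := f/\lambda$ and $\chi := \Psi_t^{-1}\circ \Phi_t$. By hypothesis $(ii)$, $\chi$ is convex with $\chi(0)=0$, $\chi \leq \Psi_0^{-1}\circ \Phi_0$, and $\chi(h) \in \mathcal{A}$. The Jensen-type inequality $(iii)$ applied to the convex function $\chi$ then gives the pointwise bound
\[
\chi(P_t h) \leq P_t(\chi(h)).
\]
Applying the (increasing on $\mathbb{R}_+$) function $\Psi_t$ on both sides and using $\Phi_t = \Psi_t \circ \chi$, I get $\Phi_t(P_t h) \leq \Psi_t(P_t \chi(h))$, so after integration
\[
\int \Phi_t(P_t h)\, d\mu \leq \int \Psi_t\!\bigl(P_t \chi(h)\bigr)\, d\mu.
\]

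It remains to show that the right-hand side is $\leq 1$, which is the same as $\Vert P_t \chi(h) \Vert_{\Psi_t} \leq 1$. By the assumed $\Psi$-contraction $(i)$ applied to $\chi(h)$, this will follow from $\Vert \chi(h) \Vert_{\Psi_0} \leq 1$. But the inequality $\chi \leq \Psi_0^{-1}\circ \Phi_0$ from $(ii)$, composed with the increasing function $\Psi_0$, gives $\Psi_0(\chi(h)) \leq \Phi_0(h)$ pointwise, hence
\[
\int \Psi_0(\chi(h))\, d\mu \leq \int \Phi_0(h)\, d\mu \leq 1,
\]
which is exactly $\Vert \chi(h)\Vert_{\Psi_0}\leq 1$. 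Combining everything, $\int \Phi_t(P_t h)\, d\mu \leq 1$, i.e.\ $\Vert P_t f \Vert_{\Phi_t} \leq \lambda$, and letting $\lambda \downarrow \Vert f \Vert_{\Phi_0}$ concludes the proof.

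The argument is largely bookkeeping of definitions; the only real subtleties are (a) the justification that the infimum in the Luxembourg norm can be approximated from above, which is handled by taking $\lambda$ slightly larger than $\Vert f\Vert_{\Phi_0}$, and (b) the stability condition $\chi(\mathcal{A})\subset \mathcal{A}$ in $(ii)$, which is precisely what makes the use of $(iii)$ legitimate. The key observation—the only conceptual step—is the factorization $\Phi_t=\Psi_t\circ\chi$, which lets one transport the transported contraction through Jensen $(iii)$ and hypothesis $(i)$.
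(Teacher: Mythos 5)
Your proof is correct and follows essentially the same route as the paper's: factorizing $\Phi_t = \Psi_t \circ \chi$ with $\chi = \Psi_t^{-1}\circ\Phi_t$, pushing the semi-group through $\chi$ via Jensen's hypothesis $(iii)$, applying the $\Psi$-scale contraction $(i)$, and then returning to $\Phi_0$ via the pointwise comparison in $(ii)$. The only difference is in bookkeeping: the paper starts from the normalization $\int \Phi_t\bigl(P_t f/\Vert P_t f\Vert_{\Phi_t}\bigr)\,d\mu = 1$ and derives a lower bound on $\Vert f\Vert_{\Phi_0}$, whereas you work with $\lambda > \Vert f\Vert_{\Phi_0}$, build an upper bound on $\int\Phi_t(P_t f/\lambda)\,d\mu$, and let $\lambda \downarrow \Vert f\Vert_{\Phi_0}$ — a choice that sidesteps any concern about whether the Luxembourg infimum is attained.
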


\begin{proof}
By definition of the norm and Jensen's inequality given in $(iii)$, together
with $(ii)$, we have
\begin{align*}
1 & = \int \Phi _{t}\left (
\frac{P_{t} f}{\Vert P_{t} f \Vert _{\Phi _{t}}} \right )d\mu = \int
\Psi _{t} \circ \Psi _{t}^{-1} \circ \Phi _{t} \left ( P_{t}
\frac{f}{\Vert P_{t} f\Vert _{\Phi _{t}}} \right ) d\mu
\\
& \leq \int \Psi _{t} \left (P_{t} \Psi _{t}^{-1} \circ \Phi _{t}
\left ( \frac{f}{\Vert P_{t} f\Vert _{\Phi _{t}}} \right ) \right ) d
\mu .
\end{align*}
This implies by definition of the norm and the hypercontractivity for the
family $\Psi _{t}$ (given in $(i)$) that
\begin{equation*}
1 \leq \Vert P_{t} \Psi _{t}^{-1} \circ \Phi _{t} \Big (
\frac{f}{\Vert P_{t} f\Vert _{\Phi _{t}}} \Big ) \Vert _{\Psi _{t}}
\leq \Vert \Psi _{t}^{-1} \circ \Phi _{t} \Big (
\frac{f}{\Vert P_{t} f\Vert _{\Phi _{t}}} \Big ) \Vert _{\Psi _{0}} .
\end{equation*}
It follows that
$1 \leq \int \Psi _{0} \circ \Psi _{t}^{-1} \circ \Phi _{t} \Big (
\frac{f}{\Vert P_{t} f\Vert _{\Phi _{t}}} \Big ) d\mu $. Hence by point
$(ii)$,
$1 \leq \int \Phi _{0} \Big (
\frac{f}{\Vert P_{t} f\Vert _{\Phi _{t}}} \Big ) d\mu $. In turn,
$\Vert f \Vert _{\Phi _{0}} \geq \Vert P_{t} f \Vert _{\Phi _{t}}$. This
ends the proof.
\end{proof}

\begin{example}
Assume that $\Phi _{t} = \Psi _{t} \circ F$ for a fixed Young function
$F$. Then hypotheses $(ii)$ and $(iii)$ are automatically satisfied. For
instance, it is known that the linear semi-group with diffusion generator
$L=\Delta -\nabla U \nabla $ with $\mathrm{Hess}(U) \geq \rho >0$ satisfies
log-Sobolev inequality with constant $2/\rho $ and in turn is hypercontractive
in the sense that
\begin{equation*}
\Vert P_{t} f \Vert _{q(t)} \leq \Vert f \Vert _{2} , \qquad \qquad
\text{with } q(t):=1+e^{(4/\rho ) t}.
\end{equation*}
Now let $\Psi _{t}(x)=|x|^{q(t)}$. Hence, for any Young function $F$, the
previous proposition asserts that
\begin{equation*}
\Vert P_{t} f \Vert _{F^{q(t)}} \leq \Vert f \Vert _{F^{2}} .
\end{equation*}
Similarily from \cite{barthe-cattiaux-roberto} we learn that the semi-group
associated to $L= \Delta -\nabla U \nabla $ with $U(x)=|x|^{\alpha }$ (more
precisely a smoothed version of $|x|^{\alpha }$),
$1 \leq \alpha \leq 2$, is hypercontractive in the Orlicz' spaces family
$\mathbb{L}_{\Phi _{t}}$ with
$\Phi _{t} = x^{2} e^{ct \log (1+|x|^{2})^{2(1-\frac{1}{\alpha })}}$ for some
constant $c$. It follows that for any Young function $F$, the semi-group
$(P_{t})_{t \geq 0}$ is also hypercontractive in the Orlicz' spaces family
$\mathbb{L}_{\Psi _{t}}$ with
$\Psi _{t} = F(x)^{2} e^{ct \log (1+|F(x)|^{2})^{2(1-\frac{1}{\alpha })}}$.
\end{example}

\subsection{Hypercontractivity versus hyper-boundedness}
\label{sec:hb}

In this section, we deal with perturbation arguments that allows one to
get some hyper-boundedness property starting from hypercontractivity.

\begin{theorem}
Let $(\Phi _{t})_{t \geq 0}$ and $(\Psi _{t})_{t \geq 0}$ be two standard
Orlicz families built respectively from $F$ and $\widetilde{F}$,
$\Phi _{0}$ and $\lambda $, both satisfying the hypotheses of Definition~{\ref{def:sof}}.

Assume that for any $\varepsilon >0$ there exists
$D(\varepsilon )\geq 0$ such that all $x \geq 0$ satisfy
\begin{equation*}
\widetilde{F}(x) \leq \varepsilon F(x) + D(\varepsilon ) .
\end{equation*}
Suppose also that for any $f$ and any $t \geq 0$, it holds
\begin{equation*}
\Vert P_{t} f \Vert _{\Phi _{t}} \leq \Vert f \Vert _{\Phi _{0}} .
\end{equation*}

Then, for any $s_{2} \geq s_{1} \geq 0$, any $t \geq 0$, any
$\mathcal{C}^{1}$ increasing function
$q : \mathbb{R}^{+} \to \mathbb{R}^{+}$ with $q(0)=s_{1}$ and
$q(t)=s_{2}$, it holds
\begin{equation*}
\Vert P_{t} f \Vert _{\Psi _{s_{2}}} \leq \Vert f \Vert _{\Psi _{s_{1}}}
\exp \left \{  \int _{0}^{t} q'(u) \lambda '(q(u)) D \left (
\frac{\lambda '(0)}{q'(u)\lambda '(q(u))} \right ) du\right \} ,
\;\; \forall f \in \mathbb{L}_{\Psi _{s_{1}}} .
\end{equation*}
\end{theorem}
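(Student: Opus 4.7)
The plan is to track $M(u) := \|P_u f\|_{\Psi_{q(u)}}$ along the prescribed path $q$, bound its logarithmic derivative by combining the perturbation $\widetilde F \leq \varepsilon F + D(\varepsilon)$ with the $F$-Sobolev inequality that the hypercontractivity for $(\Phi_t)$ encodes through Theorem \ref{th:hyp}, and then integrate. The crux is an exact cancellation: an optimal choice of $\varepsilon = \varepsilon(u)$ makes the $F$-Sobolev contribution match the dissipation term, leaving only the explicit residue $D(\varepsilon)$.

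First, the direction $(ii)\Rightarrow (i)$ of Theorem \ref{th:hyp} applied to $(\Phi_t)$ turns the hypothesis $\|P_t f\|_{\Phi_t}\leq \|f\|_{\Phi_0}$ into the $F$-Sobolev inequality \eqref{eq:FsobGross} for $\Phi_0$ with $c = 1/\lambda'(0)$. Since the two families share $\Psi_0 = \Phi_0$, I transfer this to $\Psi_{q(u)}$: for $h$ with $\|h\|_{\Psi_{q(u)}}=1$, set $g := \Phi_0^{-1}(\Psi_{q(u)}(h))$, so that $\|g\|_{\Phi_0}=1$, $\Phi_0(g) = \Psi_{q(u)}(h)$, and the chain rule together with Lemma \ref{lem:sof}$(iv)$ applied to $(\Psi_t)$ (using $\Psi_0=\Phi_0$) yield
\[
\Phi_0''(g)\,|\nabla g|^2 \;=\; \frac{\Phi_0''}{{\Phi_0'}^2}\!\circ\!\Phi_0^{-1}(\Psi_{q(u)}(h))\;\Psi_{q(u)}'(h)^2\,|\nabla h|^2 \;\leq\; \Psi_{q(u)}''(h)\,|\nabla h|^2.
\]
Plugging into \eqref{eq:FsobGross} produces the transferred bound
\[
\int \Psi_{q(u)}(h)\,F\!\left(\Psi_{q(u)}(h)\right) d\mu \;\leq\; \frac{1}{\lambda'(0)}\int \Psi_{q(u)}''(h)\,|\nabla h|^2\,d\mu.
\]

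Next, Lemma \ref{lem:technical} in its homogeneous form, adapted to the reparametrization $u \mapsto q(u)$ via the chain rule, gives with $h := P_u f / M(u)$
\[
\frac{M'(u)}{M(u)}\int h\,\Psi_{q(u)}'(h)\,d\mu \;=\; q'(u)\int \dot\Psi_{q(u)}(h)\,d\mu \;-\; \int \Psi_{q(u)}''(h)\,|\nabla h|^2\,d\mu,
\]
and Lemma \ref{lem:sof}$(iii)$ identifies $\dot\Psi_{q(u)}(h) = \lambda'(q(u))\Psi_{q(u)}(h)\widetilde F(\Psi_{q(u)}(h))$. Applying $\widetilde F \leq \varepsilon F + D(\varepsilon)$, using $\int \Psi_{q(u)}(h)\,d\mu=1$, substituting the transferred $F$-Sobolev inequality, and finally selecting
\[
\varepsilon \;=\; \frac{\lambda'(0)}{q'(u)\,\lambda'(q(u))}
\]
makes the coefficient of $\int \Psi_{q(u)}''(h)|\nabla h|^2\,d\mu$ equal to $+1$, which cancels the dissipation term exactly, leaving
\[
\frac{M'(u)}{M(u)}\int h\,\Psi_{q(u)}'(h)\,d\mu \;\leq\; q'(u)\,\lambda'(q(u))\,D\!\left(\frac{\lambda'(0)}{q'(u)\,\lambda'(q(u))}\right).
\]

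Finally, the elementary bound $\Psi_{q(u)}(x)\leq x\,\Psi_{q(u)}'(x)$ from \eqref{eq:delta2} combined with $\int \Psi_{q(u)}(h)\,d\mu=1$ gives $\int h\,\Psi_{q(u)}'(h)\,d\mu\geq 1$; since the right-hand side above is non-negative, one deduces $(\log M)'(u) \leq q'(u)\lambda'(q(u))D(\,\cdot\,)$. Integrating on $[0,t]$ and exponentiating, together with $M(0)=\|f\|_{\Psi_{s_1}}$ and $M(t)=\|P_t f\|_{\Psi_{s_2}}$, gives the stated estimate. The principal technical point is the transfer step: transporting a $\Phi_0$-inequality to the Young function $\Psi_{q(u)}$ of a \emph{different} standard Orlicz family, which works only because the two families share the same base $\Phi_0$ and because the monotonicity in Lemma \ref{lem:sof}$(iv)$ holds for any standard family.
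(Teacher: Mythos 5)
Your proof is correct and follows essentially the same route as the paper's: extract the $F$-Sobolev inequality from the hypercontractivity hypothesis via Theorem \ref{th:hyp}, differentiate $\|P_u f\|_{\Psi_{q(u)}}$ via Lemma \ref{lem:technical}, use Lemma \ref{lem:sof}$(iii)$--$(iv)$ together with the bound $\widetilde F\leq\varepsilon F+D(\varepsilon)$, and pick $\varepsilon=\lambda'(0)/(q'(u)\lambda'(q(u)))$ to cancel the dissipation term. The only (cosmetic) difference is that you transfer the $F$-Sobolev inequality from $\Phi_0$ to $\Psi_{q(u)}$ \emph{before} applying the $\widetilde F\leq\varepsilon F+D(\varepsilon)$ perturbation, whereas the paper first perturbs at time $0$ and then pushes the resulting defective inequality forward through the isometry $I_{0,q(u)}$; the two orders yield the same inequality.
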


\begin{proof}
Our aim is to use the hypercontractivity property in Orlicz spaces
$\mathbb{L}_{\Phi _{t}}$ together with {\ref{th:hyp}} to get a functional
inequality involving the Young functions $\Phi _{t}$, and then use the
assumption on $F$ and $\widetilde{F}$ to get a similar inequality for
$\widetilde{F}$.

Fix $s_{2} \geq s_{1} \geq 0$, $t \geq 0$ and a $\mathcal{C}^{1}$ increasing
function $q : \mathbb{R}^{+} \to \mathbb{R}^{+}$ with $q(0)=s_{1}$ and
$q(t)=s_{2}$. Fix $f \in \mathbb{L}_{\Psi _{s_{1}}}$ and let
$N(u):=\Vert P_{u} f \Vert _{\Psi _{q(u)}}$ and $g:=P_{u} f/N(u)$. Applying
Lemma~{\ref{lem:technical}} to
$\widetilde{\Psi }(t,x):=\Phi _{q(t)}(x)$, and observing that
$\frac{\partial }{\partial t}\widetilde{\Psi }(t,x):=q'(t)\dot{\Psi }_{q(t)}(x)$,
we get
\begin{align*}
\frac{N'(u)}{N(u)} \int g \Psi _{q(u)}' \left ( g \right ) d\mu & = q'(u)
\int \dot{\Psi }_{q(u)} \left ( g \right ) d\mu - \int \Psi _{q(u)}''
\left ( g \right ) \left | \nabla g \right |^{2} d\mu .
\end{align*}

Since $\Psi _{q(u)}$ is a nice Young function,
$x \Psi _{q(u)}'(x) \geq \Psi _{q(u)}(x)$ for any $x \geq 0$, any
$u$. Therefore $\int g \Psi _{q(u)}' \left ( g \right ) d\mu \geq 1$ and
in turn, when $N'(u) \geq 0$, we have
\begin{equation*}
\frac{N'(u)}{N(u)} \leq q'(u) \int \dot{\Psi }_{q(u)} \left ( g\right )
d\mu - \int \Psi _{q(u)}'' \left ( g \right ) \left | \nabla g\right |^{2}
d\mu .
\end{equation*}
Now, thanks to {\ref{th:hyp}}, the hypercontractivity assumption
guarantees that
\begin{equation*}
\Vert f \Vert _{\Phi _{0}}^{2} \int \Phi _{0} \left (
\frac{f}{\Vert f \Vert _{\Phi _{0}}} \right ) F \left ( \Phi _{0}
\left ( \frac{f}{\Vert f \Vert _{\Phi _{0}}} \right ) \right ) d \mu
\leq \frac{1}{\lambda '(0)} \int \Phi _{0}'' \left (
\frac{f}{\Vert f \Vert _{\Phi _{0}}} \right ) |\nabla f|^{2} d\mu .
\end{equation*}
By our assumption, item $(iii)$ of Lemma~{\ref{lem:sof}} (recall that
$\Psi _{0}=\Phi _{0}$), we have
\begin{equation*}
\dot{\Psi }_{0} = \lambda '(0) \Phi _{0} \widetilde{F}(\Phi _{0})
\leq \lambda '(0) \varepsilon \Phi _{0} F(\Phi _{0}) + \lambda '(0)D(
\varepsilon )\Phi _{0} .
\end{equation*}
Therefore, for any $f$ with $\|f\|_{\Phi _{0}}=1$,
\begin{align*}
\int \dot{\Psi }_{0} \left ( f \right ) d\mu & \leq \lambda '(0)
\varepsilon \int \Phi _{0} \left ( f \right ) F \left ( \Phi _{0}
\left ( f \right ) \right ) d\mu + \lambda '(0) D(\varepsilon )
\\
& \leq \varepsilon \int \Phi _{0}'' \left ( f \right ) \left |
\nabla f \right |^{2} d\mu + \lambda '(0) D(\varepsilon ) \\
& =
\varepsilon \int \Psi _{0}'' \left ( f \right ) \left | \nabla f
\right |^{2} d\mu + \lambda '(0) D(\varepsilon ) .
\end{align*}
Recall the isometry
\begin{equation*}
\begin{array}{r@{\quad }c@{\quad }l}
I_{s,t} : \mathbb{L}_{\Psi _{t}} & \rightarrow & \mathbb{L}_{\Psi _{s}}
\\
f & \mapsto & \Vert f \Vert _{\Psi _{t}} \Psi _{s}^{-1} \circ \Psi _{t}
\left ( \frac{f}{\Vert f \Vert _{\Psi _{t}}} \right )
\end{array}
\end{equation*}
from Proposition~{\ref{prop:move}} that we may use with $s=0$ and
$t=q(u)$. The previous inequality applied to $I_{0,q(u)}(f)$ ensures that
for any $f$ with
$\| f \|_{\Psi _{q(u)}}=\|I_{0,q(u)}(f)\|_{\Psi _{0}}=\|I_{0,q(u)}(f)
\|_{\Phi _{0}}=1$, it holds
\begin{equation*}
\int \! \dot{\Psi }_{0} \circ \Psi _{0}^{-1} \circ \Psi _{q(u)}(f) d\mu
\leq 
\varepsilon \int \! \frac{\Psi _{0}''}{{\Psi _{0}'}^{2}} \circ
\Psi _{0}^{-1} \circ \Psi _{q(u)} (f) \left | \nabla f \right |^{2} 
{\Psi'}^2_{q(u)}(f) d\mu + \lambda '(0) D(\varepsilon ) .
\end{equation*}
It follows from items $(iii)$ and $(iv)$ of Lemma~{\ref{lem:sof}} that
\begin{equation*}
\frac{\lambda '(0)}{\lambda '(q(u))} \int \dot{\Psi }_{q(u)} (f) d\mu
\leq \varepsilon \int \Psi _{q(u)}'' (f) \left | \nabla f \right |^{2}
d\mu + \lambda '(0) D(\varepsilon ) .
\end{equation*}
This leads to
\begin{align*}
\frac{N'(u)}{N(u)} 
& \leq 
\frac{q'(u)\lambda '(q(u))}{\lambda '(0)}
\left ( \varepsilon \int \Psi _{q(u)}'' (f) \left | \nabla f \right |^{2}
d\mu + \lambda '(0) D(\varepsilon ) \right ) \\
& \quad - \int \Psi _{q(u)}'' (f)
\left | \nabla f \right |^{2} d\mu
\end{align*}
for any $u$ such that $N'(u) \geq 0$. The latter being valid
for any $\varepsilon >0$, choose $\varepsilon $ such that
$q'(u)\lambda '(q(u)) \varepsilon = \lambda '(0)$. Therefore, provided
that $N'(u)\geq 0$ it holds
\begin{equation*}
\frac{N'(u)}{N(u)} \leq q'(u) \lambda '(q(u)) D \left (
\frac{\lambda '(0)}{q'(u)\lambda '(q(u))} \right ) .
\end{equation*}
This bound trivially holds when $N'(u) <0$. Hence
\begin{align*}
\log \Vert P_{t} f \Vert _{\Psi _{s_{2}}} - \log \Vert P_{0} f \Vert _{
\Psi _{s_{1}}} & = \int _{0}^{t} \frac{d}{du} \log \Vert P_{u} f
\Vert _{\Psi _{q(u)}} du = \int _{0}^{t} \frac{N'(u)}{N(u)} du
\\
& \leq \int _{0}^{t} q'(u) \lambda '(q(u)) D \left (
\frac{\lambda '(0)}{q'(u)\lambda '(q(u))} \right ) du .
\end{align*}
The result follows.
\end{proof}

As an example of application, consider, for $\beta \in (0,1]$,
$F_{\beta }(x)=(\log (1+x))^{\beta }- (\log 2)^{\beta }$. It is not difficult
to check that for any $\varepsilon >0$ and any $\beta ' < \beta $, it holds
\begin{equation*}
F_{\beta '}(x) \leq \varepsilon F_{\beta }(x) + D(\varepsilon )
\end{equation*}
with
\begin{equation*}
D(\varepsilon ):= -(\log 2)^{\beta '} + \varepsilon (\log 2)^{\beta }+
\left ( \frac{\beta '}{\beta } \right )^{\frac{\beta '}{\beta - \beta '}} \frac{\beta - \beta '}{\beta } \left (
\frac{1}{\varepsilon } \right )^{\frac{\beta '}{\beta - \beta '}} .
\end{equation*}
Now let
\begin{equation*}
u_{\alpha }(x)=\left \{
\begin{array}{l@{\quad }l}
|x|^{\alpha }& \text{for } |x| >1
\\
\frac{\alpha (\alpha -2)}{8} x^{4} + \frac{\alpha (4-\alpha )}{4}x^{2}
+ (1-\frac{3}{4} \alpha + \frac{1}{8} \alpha ^{2}) & \text{for } |x| \leq 1
\end{array}
\right .
\end{equation*}
be a smooth version of $|x|^{\alpha }$, $\alpha \in (1,2)$. Define the probability
measure
$d\mu _{\alpha }^{n}(x)=\prod _{i=1}^{n} Z_{\alpha }^{-1}e^{-u_{\alpha }(x_{i})}dx_{i}$
on $\mathbb{R}^{n}$. As already mentioned in Remark~{\ref{rem:go}}, it follows
from \cite[Proposition 33]{barthe-cattiaux-roberto} that Inequality {\eqref{eq:FsobGross}} holds for $F_{\beta }$, $\Phi _{0}(x)=x^{2}$ and some
$c=c(\alpha )>0$ and therefore that the semi-group
$(P_{t})_{t \geq 0}$ associated to $\mu _{\alpha }^{n}$ is hypercontractive
along the standard Orlicz family $(\Phi _{t})_{t \geq 0}$ built from
$F_{\beta }$, $\Phi _{0}$ and any $\lambda $ satisfying
$\lambda '(t) \leq 1/c$. Fix for simplicity $\lambda (t)=t/c$.

Consider the standard Orlicz families $(\Psi _{t})_{t \geq 0}$ built from
$F_{\beta '}$, $\Phi _{0}(x)=x^{2}$ and $\lambda $.

The previous theorem shows that (for $s_{1}=0$ and $s_{2}=s$)
\begin{equation*}
\Vert P_{t} f \Vert _{\Psi _{s}} \leq m(t) \Vert f \Vert _{2}
\end{equation*}
where
\begin{equation*}
m(t) = \inf _{q} \exp \left \{  \frac{1}{c}\int _{0}^{t} q'(u) D
\left ( \frac{1}{q'(u)} \right ) du\right \}
\end{equation*}
where the infimum is running over all increasing
$q \colon [0,t] \to \mathbb{R}_{+}$ with $q(0)=0$ and $q(t)=s$. We stress
that the Luxembourg norm is computed here with reference measure
$\mu _{\alpha }^{n}$.

One has
\begin{equation*}
q'(u) D \left ( \frac{1}{q'(u)} \right ) =-q'(u)(\log 2)^{\beta '} + (
\log 2)^{\beta }+ C_{\beta ,\beta '} (q'(u))^{\frac{\beta }{\beta -\beta '}}
\end{equation*}
where we set, for simplicity,
$C_{\beta ,\beta '}:= \left ( \frac{\beta '}{\beta } \right )^{\frac{\beta '}{\beta - \beta '}} \frac{\beta - \beta '}{\beta }$. Therefore,
\begin{align*}
\inf _{q} \left \{  \int _{0}^{t} q'(u) D \left ( \frac{1}{q'(u)}
\right ) du \right \}  
& = 
-s (\log 2)^{\beta '} 
+ t (\log 2)^{\beta } \\
&\quad 
+ C_{\beta ,\beta '} \inf _{q} \left \{  \int _{0}^{t} (q'(u))^{\frac{\beta }{\beta -\beta '}} du \right \}  .
\end{align*}
Since $\beta /(\beta -\beta ') \geq 1$, by Holder's inequality (and equality
cases in Holder's inequality), it is easy to see that
\begin{equation*}
\inf _{q} \left \{  \int _{0}^{t} (q'(u))^{\frac{\beta }{\beta -\beta '}} du
\right \}  = s^{\frac{\beta }{\beta -\beta '}}t^{-
\frac{\beta '}{\beta -\beta '}}.
\end{equation*}
As a conclusion
\begin{equation*}
m(t)=\exp \left \{  \frac{1}{c} \left ( -s (\log 2)^{\beta '} + t (
\log 2)^{\beta }+ C_{\beta ,\beta '} s^{\frac{\beta }{\beta -\beta '}}t^{-
\frac{\beta '}{\beta -\beta '}} \right ) \right \}  .
\end{equation*}
Note in particular that the factor in the exponential explodes for a fixed
$t$, when $s$ goes to infinity. This must be since the semi-group associated
to $\mu _{\alpha }^{n}$ can not be ultracontractive.

\section{Contraction property for inhomogeneous Markov semi-groups}
\label{sec:inhomogeneous}

In this section we deal with the time-dependent diffusion operators
$L_{t}:=\Delta - \nabla V_{t} \cdot \nabla $, $t \geq 0$, on
$\mathbb{R}^{n}$, with $V_{t}$ smooth enough and such that
$\int e^{-V_{t}}=1$. Recall that the associated semi-group
$(P^{(t)}_{s})_{s \geq 0}$ is reversible with respect to the probability
measure $\mu _{t}(dx):=e^{-V_{t}(x)}dx$. All along this section Luxembourg
norms are understood with respect to $\mu _{t}$. We may omit
such a dependence when not needed and write otherwise
$\| \cdot \|_{\Phi _{t},\mu _{t}}$.

In order to obtain contraction bounds for $P^{(t)}_{t}f$, one could try
to use the following natural simple strategy. For $t$ fixed, one may assume
that $\mathrm{Hess}(V_{t}) \geq \rho _{t}$ for some $\rho _{t}>0$ so that
Gross' theorem applies and leads to
\begin{equation*}
\| P_{s}^{(t)} f \|_{q_{t}(s),\mu _{t}} \leq \| f \|_{2,\mu _{t}}
\qquad s,t \geq 0,
\end{equation*}
with, say, $q_{t}(s) \leq 1 + e^{(4/\rho _{t})s}$ where we set
$\| g \|_{q,\mu _{t}} := \left ( \int |g|^{q} d\mu _{t} \right )^{\frac{1}{q}}$ for the $\mathbb{L}_{q}$ norm of $g$ with respect to
$\mu _{t}$ (we choose $q_{t}(0)=2$ for simplicity). Applying the latter
at time $s=t$ leads to
\begin{equation*}
\| P_{t}^{(t)} f \|_{q_{t}(t),\mu _{t}} \leq \| f \|_{2,\mu _{t}} .
\end{equation*}
Now observe that the latter might be very weak if $\rho _{t}$ is small.
Moreover and more essentially one would like to deal with a norm on the
right hand side independent of $t$ (say related to $\mu _{0}$). Before
achieving this program, by means of Lemma~{\ref{lem:technical}}, let us end
this introduction with an easy (and very specific) example of inhomogeneous
Markov semi-group whose hypercontractivity property can be derived from
the results of the previous section.

Consider for instance $V_{t} = U_{1}$ for $t \in [0,T]$ and
$V_{t}=U_{2}$ for $t >T$ where $U_{1}$ and $U_{2}$ are associated to hypercontractivity
properties in Orlicz spaces $\mathbb{L}_{\Phi _{t}^{(1)}}$ and
$\mathbb{L}_{\Phi _{t}^{(2)}}$ respectively. Then, we can argue that
$\| P_{t}^{(t)}f \|_{\Phi _{t}^{(1)}} \leq \| P_{s}^{(s)}f \|_{\Phi _{s}^{(1)}}$,
for any $s \leq t \leq T$, and then
$\| P_{t}^{(t)}f \|_{\Phi _{t}^{(2)}} \leq \| P_{s}^{(s)}f \|_{\Phi _{s}^{(2)}}$
for any $T < s \leq t$. Therefore, if the two families of Orlicz spaces
coincide at time T, \textit{i.e.} $\Phi _{T}^{(1)}=\Phi _{T}^{(2)}$, and
potentially modulo some extra assumptions on the Young functions
$\Phi _{t}^{(1)}$, $\Phi _{t}^{(2)}$, if we set
$\Phi _{t}=\Phi _{t}^{(1)}$ for $t \in [0,T]$ and
$\Phi _{t}:=\Phi _{t}^{(2)}$ for $t \geq T$, one has
$\| P_{t}^{(t)}f \|_{\Phi _{t}} \leq \| P_{s}^{(s)}f \|_{\Phi _{s}}$ for
all $s \leq t$. As already mentioned this is however very specific and
somehow artificially inhomogeneous. We would like to deal with examples
of potentials $V_{t}$ that evolve all along the time $t$.

In the next section we will deal under the restricted hypothesis of the
log-Sobolev inequality {\eqref{eq:Fsob3}} related to the Orlicz family
$\Phi _{t}(x)=|x|^{q(t)}$ ($\mathbb{L}_{p}$ scale). This makes the presentation
more precise and easier by reducing some technicalities. However, it already
encompasses many of the difficulties. The last section (that comes after)
will finally deal with a more general setting.

We stress that the results below are a first step in the understanding
of contraction properties for inhomogeneous Markov semi-groups. Many remain
to be discovered and we believe that our investigations open new lines
of research with possible application, as mentioned in the introduction,
to non-linear parabolic time dependent problems (in infinite dimension).

\subsection{$\mathbb{L}_{p}$-scales}
\label{sec4.1}

In order to give the flavor of what is happening in the inhomogeneous setting
(and avoid some technicalities), in this section we may only deal with
contractivity properties in $\mathbb{L}_{p}$-scales. Recall that
$X_{-}=max(-X,0)$ stands for the negative part.

\begin{theorem}
\label{th:main1}
Consider the inhomogeneous diffusion operator $L_{t}$ as above. Set
$a_{t}:=\| (\dot{V}_{t})_{-} \|_{\infty }$,
$b_{t}:=\| |\nabla \dot{V}_{t} |\|_{\infty }$,
$c_{t}:=\| (\nabla V_{t} \cdot \nabla \dot{V}_{t} - \Delta \dot{V}_{t})_{-}
\|_{\infty }$ and assume that $a_{t},b_{t},c_{t} < \infty $ for all
$t \geq 0$. Assume also that, for all $t \geq 0$ there exists
$\rho _{t} \in \mathbb{R}$ such that
$\mathrm{Hess}(V_{t}) \geq \rho _{t}$. Finally, assume that there exists
$\bar{\rho }_{t}>0$ such that the following log-Sobolev inequality holds
%
\begin{equation}
\label{eq:Fsob3}
\int f^{2} \log (f^{2}) d\mu _{t} \leq \bar{\rho }_{t} \int |\nabla f|^{2}
d\mu _{t} ,
\end{equation}
for all $f$ with $\int f^{2} d\mu _{t} =1$ for which the right hand side
is well defined. Then, for any
$f \colon \mathbb{R}^{n} \to \mathbb{R}_{+}$ smooth enough, any
$p >1$ and any $s \leq t$, it holds
\begin{equation*}
\| P_{t}^{(t)}f\|_{\Phi _{t},{\mu _{t}}} \leq m(s,t) \| P_{s}^{(s)}f
\|_{\Phi _{s},{\mu _{s}}}
\end{equation*}
where $\Phi _{t}(x)=|x|^{q(t)}$,
$q(t)=1+(p-1)\exp \{\int _{0}^{t} (2/\bar{\rho }_{s}) ds \}$,
$t \geq 0$ and
\begin{equation*}
m(s,t):=\exp \left \{  \int _{s}^{t} \frac{a_{u}}{q(u)} + uc_{u} + b_{u}^{2}
\frac{1-e^{-\rho _{u} u}}{2\rho _{u}} (q(u)-1) du \right \}  .
\end{equation*}
\end{theorem}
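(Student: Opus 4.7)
The plan is to differentiate $N(t) := \|P_t^{(t)}f\|_{\Phi_t}$ using Lemma~\ref{lem:technical} specialized to $\Phi_t(x) = |x|^{q(t)}$, derive a differential inequality $N'(t)/N(t) \leq \beta(t)$ whose right-hand side matches the integrand defining $m(s,t)$, and integrate from $s$ to $t$. Setting $g := P_t^{(t)}f/N(t)$ (so $\int g^{q(t)}d\mu_t = 1$) and $h := g^{q(t)/2}$ (so $\|h\|_{2,\mu_t}=1$), one immediately computes $\int g\Phi_t'(g)d\mu_t = q(t)$, $\int \dot\Phi_t(g)d\mu_t = \frac{q'(t)}{q(t)}\int h^2 \log h^2\, d\mu_t$, and $\int \Phi_t''(g)|\nabla g|^2 d\mu_t = \frac{4(q(t)-1)}{q(t)}\int|\nabla h|^2 d\mu_t$, together with the pointwise identity $g\Phi_t''(g)|\nabla g|=2(q(t)-1) h|\nabla h|$.

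The prescribed $q(t) = 1 + (p-1)\exp\int_0^t (2/\bar\rho_s)ds$ satisfies $q'(t)\bar\rho_t = 2(q(t)-1)$, which is \emph{half} the rate needed by Gross's tight hypercontractivity. Consequently, applying the log-Sobolev inequality \eqref{eq:Fsob3} to $h$ produces
\begin{equation*}
\int \dot\Phi_t(g) d\mu_t - \int \Phi_t''(g)|\nabla g|^2 d\mu_t \leq -\frac{2(q(t)-1)}{q(t)} \int |\nabla h|^2 d\mu_t,
\end{equation*}
i.e.\ a negative surplus of gradient energy. This surplus is the reserve that will absorb the inhomogeneous corrections.

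Among the remaining three terms in Lemma~\ref{lem:technical}, the easiest is $-\int \Phi_t(g)\dot V_t d\mu_t$, which is bounded by $a_t$ via $-\dot V_t\leq (\dot V_t)_-$ and $\int g^{q(t)}d\mu_t = 1$. The term involving $\nabla V_t\cdot \nabla\dot V_t - \Delta \dot V_t$ collapses by self-adjointness of $P_s^{(t)}$ in $\mathbb{L}_2(\mu_t)$ combined with $P_s^{(t)}P_{t-s}^{(t)} = P_t^{(t)}$: indeed $\int P_{t-s}^{(t)}f\cdot P_s^{(t)}(\Phi_t'(g))d\mu_t = \int P_t^{(t)}f\cdot \Phi_t'(g)d\mu_t = q(t)N(t)$ is constant in $s$, yielding an upper bound $c_t\, t\, q(t)N(t)$. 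The hard term is the one involving $\nabla\dot V_t$: I would estimate $|\nabla \dot V_t|\leq b_t$, invoke the pointwise Bakry--Emery commutation $|\nabla P_s^{(t)}(\Phi_t'(g))| \leq e^{-\rho_t s} P_s^{(t)}(\Phi_t''(g)|\nabla g|)$ (which holds under $\mathrm{Hess}(V_t) \geq \rho_t$), apply self-adjointness to reduce everything to a single integral against $g$, and finish with the algebraic identity together with Cauchy--Schwarz. This produces a contribution of order $b_t(q(t)-1)N(t) \frac{1-e^{-\rho_t t}}{\rho_t} \|\nabla h\|_{2,\mu_t}$.

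Assembling all four estimates and dividing by $q(t)N(t)$ gives a bound of the form $\frac{N'(t)}{N(t)} \leq \frac{a_t}{q(t)} + tc_t + C\|\nabla h\|_{2,\mu_t} - \frac{2(q(t)-1)}{q(t)^2}\|\nabla h\|_{2,\mu_t}^2$, with an explicit $C$ proportional to the $\nabla\dot V_t$ contribution above. A calibrated Young inequality $AB \leq \frac{\varepsilon}{2}A^2+\frac{1}{2\varepsilon}B^2$, with $\varepsilon$ chosen so that $\frac{\varepsilon}{2}\|\nabla h\|_{2,\mu_t}^2$ exactly cancels the reserve $\frac{2(q(t)-1)}{q(t)^2}\|\nabla h\|_{2,\mu_t}^2$, isolates a deterministic residual of the form $b_t^2\frac{1-e^{-\rho_t t}}{2\rho_t}(q(t)-1)$, and Gronwall-integration from $s$ to $t$ delivers $N(t) \leq m(s,t)N(s)$. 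The \emph{main obstacle} is this final calibration: the factor of $2$ in $q'(t)\bar\rho_t = 2(q(t)-1)$ (as opposed to $4$ in tight Gross) is precisely what creates the gradient margin, and the Bakry--Emery bound must be used carefully so that the time integral $\int_0^t e^{-\rho_t s}ds$ combines with Young's inequality to produce the $(1-e^{-\rho_t t})/\rho_t$ factor with the correct coefficient appearing in $m(s,t)$.
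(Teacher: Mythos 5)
Your proposal is correct and follows essentially the same route as the paper's own proof: specialize Lemma~\ref{lem:technical} to $\Phi_t(x)=|x|^{q(t)}$, bound the $\dot V_t$ and $\nabla V_t\cdot\nabla\dot V_t-\Delta\dot V_t$ terms by $a_t$ and $tc_tq(t)$ respectively via reversibility, handle the $\nabla\dot V_t$ term with the Bakry--\'Emery gradient commutation $|\nabla P_s^{(t)}h|\leq e^{-\rho_t s}P_s^{(t)}|\nabla h|$, and absorb it into the Dirichlet-form margin left by the log-Sobolev inequality and the choice $q'\bar\rho_t=2(q-1)$ through a calibrated Young inequality. The passage to $h=g^{q/2}$ and the ``reserve'' language is a cosmetic repackaging of the paper's manipulation of $\int g^{q-2}|\nabla g|^2 d\mu_t$; the underlying computation is the same.
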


\begin{remark}
As already mentioned, one possible criterion for the log-Sobolev inequality {\eqref{eq:Fsob3}} to hold, is $\mathrm{Hess}(V_{t}) \geq \rho _{t}>0$ (as
a matrix), which implies $\bar{\rho }_{t} \leq \frac{2}{\rho _{t}}$. Alternatively,
as will be used below, one can apply a perturbation argument
\emph{\`{a} la} Holley \& Stroock \cite{holleystroock87}.

If $V_{t}$ does not depend on $t$ then $m(s,t)=1$ and
$q(t)=1+(p-1)e^{(2/\bar{\rho }) t}$, which is Gross' theorem off by a factor
of $2$ in the exponential (see the introduction). This is coming from a
technical computation that uses Cauchy-Schwarz' inequality. One can actually
improve this and get
$q(t)=1+(p-1)\exp \{(2-\varepsilon )\int _{0}^{t} (2/ \bar{\rho }_{s}) ds
\}$, for any $\varepsilon >0$, but at the price of a factor $m(s,t)$ that
depends on $\varepsilon $, and that increases when $\varepsilon $ decreases.

Modulo such a factor 2, the above theorem can therefore be seen as an inhomogeneous
counterpart of Gross' theorem.
\end{remark}

\begin{example}
The above theorem contains some non trivial examples. For instance one
can consider potentials of the form
$V_{t}(x)=U(x) + \alpha (t) V(x) + {\gamma }(t)$ with $V$ unbounded
and ${\gamma }(t):=\log \int e^{-U-\alpha V} dx$ so that
$\mu _{t}$ indeed defines a probability measure.

Take for instance $U(x)=\frac{|x|^{2}}{2}$,
$\alpha \colon [0,\infty ) \to [0,\infty )$ non-decreasing and
$V(x)=(1+|x|^{2})^{\frac{\beta }{2}}$, with $\beta \in (0,1]$ (this is an
unbounded (time-dependent) perturbation of the standard Gaussian potential
$U$).

Then, $\dot{V}_{t}=\alpha '(t)(1+|x|^{2})^{\frac{\beta }{2}}$ so that
$a_{t}=0$;
$\nabla \dot{V}_{t} =\alpha '(t) \beta (1+|x|^{2})^{\frac{\beta }{2}-1}x$,
whence
$b_{t} = \alpha '(t) \beta \sqrt{
\frac{(1-\beta )^{1-\beta }}{(2-\beta )^{2-\beta }}}$ (which is understood
as its limit when $\beta =1$, namely $b_{t}=\alpha '(t)$ if
$\beta =1$). Using crude estimates, it is not difficult to prove that
$c_{t} \leq n^{2} \alpha '(t)(\alpha (t)+2)$. On the other hand (again
we omit details) $\mathrm{Hess}(V_{t}) \geq 1$ so that $\rho _{t}=1$ and {\eqref{eq:Fsob3}} holds with $\bar{\rho }_{t}=2$. {\ref{th:main1}} then implies that the corresponding  
inhomogeneous
semi-group
$(P_{t}^{(t)})_{t \geq 0}$ is hyper-bounded in the
$\mathbb{L}_{q(t)}$ scale, with $q(t)=1+(p-1)e^{t}$.

For $V(x)=\log (1+|x|^{2})$ one can easily see that $a_{t}=0$,
$b_{t}=\alpha '(t)$ and $c_{t}<\infty $. The issue is coming from estimating
$\bar{\rho }_{t}$. In fact $\mathrm{Hess}(V_{t})(x)$ is bounded below by
a positive matrix only outside a ball (of radius proportional to
$\alpha $). Therefore, one can write $V_{t}=H+R$, with $H$ strictly convex,
in the sense that $\mathrm{Hess}(H) \geq 1/2$, say, and $R$ is bounded.
Then Bakry-\'{E}mery criterion applies to the measure with density proportional
to $e^{-H}$, leading to a log-Sobolev constant at most $4$, and then we
use Holley-Stroock perturbation Lemma, see \textit{e.g.} \cite[Theorem 3.4.3]{ane}
to get Inequality {\eqref{eq:Fsob3}} with constant $\bar{\rho }_{t}$ at most
$4e^{\mathrm{Osc}(R)}$ (therefore potentially exponentially big in
$\alpha $) where $\mathrm{Osc}(R)=\sup R - \inf R$ is the oscillation of
$R$. {\ref{th:main1}} applies and finally leads to some contraction
property with $q(t) \to \infty $ for $\alpha $ bounded or slowly growing
to infinity (for instance $\alpha (t)=\log \log t$ would do).
\end{example}

\begin{proof}[Proof of {\ref{th:main1}}]
Let $\Phi _{t}(x):=|x|^{q(t)}$ with
$q \colon [0,\infty ) \to [0,\infty )$ increasing and satisfying
$q(0)>1$. Set
$N(t):= \| P_{t}^{(t)}f \|_{\Phi _{t} {, \mu _{t}}}$ for some
non-negative smooth $f$, and $g:=\frac{P_{t}^{(t)}f}{N}$ so that
$\int \Phi _{t}(g)d\mu _{t} =1$. From Lemma~{\ref{lem:technical}}, we have
\begin{align*}
N'&(t) \int g \Phi _{t}'(g) d\mu _{t} \leq N(t) \left (\int
\dot{\Phi }_{t}(g) d\mu _{t} - \int {\Phi _{t}''}(g) |
\nabla g|^{2} d\mu _{t} + a_{t} \right )
\\
& + b_{t} \int \int _{0}^{t} P_{t-s}^{(t)} f |\nabla P_{s}^{(t)}(
\Phi _{t}'(g))| dsd\mu _{t} + c_{t} \int \int _{0}^{t} P_{t-s}^{(t)} f
P_{s}^{(t)} (\Phi _{t}'(g)) ds d\mu _{t} .
\end{align*}
We observe that $x\Phi _{t}'(x)=q\Phi _{t}(x)$ so that
$\int g \Phi _{t}'(g) d\mu _{t} =q$. Also, by reversibility, the last term
of the latter satisfies
\begin{align*}
\int \int _{0}^{t} P_{t-s}^{(t)} f P_{s}^{(t)} (\Phi _{t}'(g)) ds d
\mu _{t} & = \int _{0}^{t} \int P_{s}^{(t)}(P_{t-s}^{(t)} f) \Phi _{t}'(g)
d\mu _{t} ds
\\
& = t N(t) \int g \Phi _{t}'(g) d\mu _{t} = tN(t)q(t).
\end{align*}
Since $\dot{\Phi }_{t}(x)=q'(t) |x|^{q} \log (|x|)$ and
$\Phi _{t}''(x) =q(q-1)|x|^{q-2}$, we get
\begin{align*}
q(t)\frac{N'(t)}{N(t)} & \leq \frac{q'(t)}{q(t)} \mathrm{Ent}_{\mu _{t}}(g^{q})
- q(q-1) \int g^{q-2} |\nabla g|^{2} d\mu _{t} +a_{t} + tc_{t} q(t)
\\
& \quad + \frac{b_{t}}{N(t)} \int \int _{0}^{t} P_{t-s}^{(t)} f |
\nabla P_{s}^{(t)}(\Phi _{t}'(g))| dsd\mu _{t} .
\end{align*}
The condition $\mathrm{Hess}(V_{t}) \geq \rho _{t}$ ensures that
$|\nabla P_{s}^{(t)} h| \leq e^{-\rho _{t} s} P_{s}^{(t)}(|\nabla h|)$
for all $s \geq 0$ and all $h$ (see \textit{e.g.} \cite{ane}[Proposition
5.4.5]). Hence, by reversibility
\begin{align*}
\int \int _{0}^{t} P_{t-s}^{(t)} f |\nabla P_{s}^{(t)}(\Phi _{t}'(g))|
dsd\mu _{t} & \leq \int \int _{0}^{t} e^{-\rho _{t} s} P_{t-s}^{(t)} f
P_{s}^{(t)}(|\nabla \Phi _{t}'(g)|) dsd\mu _{t}
\\
& = \int \int _{0}^{t} e^{-\rho _{t} s} P_{t}^{(t)} f \Phi _{t}''(g)|
\nabla g| dsd\mu _{t}
\\
& = N(t) \frac{1-e^{-\rho _{t} t}}{\rho _{t}} q(q-1) \int g^{q-1} |
\nabla g| d\mu _{t} .
\end{align*}
Using the inequality
$uv \leq \frac{1}{2 \varepsilon }u^{2} + \frac{\varepsilon }{2}v^{2}$ with
$\varepsilon =b_{t}\frac{1-e^{-\rho _{t} t}}{\rho _{t}}$,
$u=g^{\frac{q}{2}-1} |\nabla g|$ and $v= g^{\frac{q}{2}}$ we get
\begin{equation*}
\frac{1-e^{-\rho _{t} t}}{\rho _{t}} \int g^{q-1} |\nabla g| d\mu _{t}
\leq \frac{1}{2b_{t}}\int g^{q-2} |\nabla g|^{2} d\mu _{t} +
\frac{1}{2}b_{t}{\left (
\frac{1-e^{-\rho _{t} t}}{\rho _{t}} \right )^{2} } \int g^{q} d\mu _{t}
\end{equation*}
so that, since $\int g^{q} d\mu _{t} = \int \Phi _{t}(g)d\mu _{t}=1$,
\begin{align*}
q(t)\frac{N'(t)}{N(t)} & \leq \frac{q'(t)}{q(t)} \mathrm{Ent}_{\mu _{t}}(g^{q})
- \frac{q(q-1)}{2} \int g^{q-2} |\nabla g|^{2} d\mu _{t} +a_{t} + tc_{t}
q(t)
\\
& \quad + b_{t}^{2}\frac{1-e^{-\rho _{t} t}}{2\rho _{t}} q(t)(q(t)-1).
\end{align*}
Next, we observe that
$\int g^{q-2} |\nabla g|^{2} d\mu _{t} = \frac{4}{q^{2}} \int |
\nabla g^{\frac{q}{2}}|^{2} d\mu _{t}$. Hence, for
$q(t):=1+(p-1)\exp \{ \int _{0}^{t} (2/\bar{\rho }_{s}) ds \}$ which satisfies
$\frac{2(q-1)}{q'}=\bar{\rho }_{t}$, we are guaranteed by {\eqref{eq:Fsob3}} that
\begin{align*}
\frac{q'(t)}{q(t)} \mathrm{Ent}_{\mu _{t}}(g^{q}) & -
\frac{q(q-1)}{2} \int g^{q-2} |\nabla g|^{2} d\mu _{t}
\\
& = \frac{q'(t)}{q(t)} \left ( \mathrm{Ent}_{\mu _{t}}(g^{q}) -
\frac{2(q-1)}{q'(t)} \int |\nabla g^{\frac{q}{2}}|^{2} d\mu _{t}
\right )
\\
& = \frac{q'(t)}{q(t)} \left ( \mathrm{Ent}_{\mu _{t}}(g^{q}) -
\bar{\rho }_{t} \int |\nabla g^{\frac{q}{2}}|^{2} d\mu _{t} \right )
\leq 0 .
\end{align*}
It follows that
\begin{align*}
\frac{N'(t)}{N(t)} & \leq \frac{a_{t}}{q(t)} + tc_{t} + b_{t}^{2}
\frac{1-e^{-\rho _{t} t}}{2\rho _{t}} (q(t)-1)
\end{align*}
which leads to the desired conclusion.
\end{proof}

\subsection{General result}
\label{sec4.2}

In this section we establish a more general result than {\ref{th:main1}} that allows one to deal with more general Orlicz
families, and not only the $\mathbb{L}_{p}$-scales. As a motivation, one
can consider for instance as above
$V_{t}(x)=U(x) + \alpha (t) V(x) + \gamma (t)$ with
$U(x) \simeq \frac{|x|^{\alpha }}{\alpha }$ (for large $|x|$),
$\gamma (t):=\log \int e^{-U-\alpha V} dx$ and
$\Phi _{t}(x)=x^{2}e^{ct F(x)}$, with $F(x) \simeq \log (x)^{\beta }$ (for
large $x$). This corresponds, with a proper choice of $V$, to a generalization
of the hypercontractivity property proved in \cite{BCR07} in the homogeneous
setting (recall the introduction, see also Remark~{\ref{rem:go}}.

\begin{theorem}
\label{th:main2}
Consider the inhomogeneous diffusion operator $L_{t}$ as above. Assume
that for all $t \geq 0$,
$b_{t}:=\||\nabla \dot{V}_{t}| \|_{\infty }< \infty $ and that there exists
$\rho _{t} \in \mathbb{R}$ such that
$\mathrm{Hess}(V_{t}) \geq \rho _{t}$ (as a matrix).

Let $(\Phi _{t})_{t \geq 0}$ be a family of Young functions satisfying
$\Phi _{t}(x) \leq x \Phi _{t}'(x) \leq B_{t} \Phi _{t}(x)$,
${\Phi _{t}'}^{2} \leq C_{t} \Phi _{t} \Phi _{t}''$ and
$x^{2} \Phi _{t}''(x) \leq D_{t} \Phi _{t}(x) + E_{t}$ for all
$x \geq 0$ and some constants $B_{t}, C_{t}, D_{t}, E_{t}$.

Assume that for all $t \geq 0$ there exist $\delta _{t} \in [0,1)$ and
$F_{t} \in \mathbb{R}$ such that
$(\dot{V}_{t})_{-} \leq \frac{\delta _{t}}{4C_{t}} \left (|\nabla V_{t}|^{2}
- 2 \Delta V_{t}\right ) + F_{t}$.

Set
$W_{t}:=(\nabla V_{t} \cdot \nabla \dot{V}_{t} - \Delta \dot{V}_{t})_{-}$
and denote by $\bar{\rho }_{t} \in (0,\infty ]$ the best constant such that
for all $f$ with $\| f \|_{\Phi _{0}} =1$ it holds
%
\begin{equation}
\label{eq:Fsob2}
\int \dot{\Phi }_{t}(f) d\mu _{t} \leq \bar{\rho }_{t} \int \Phi _{t}''(f)
|\nabla f|^{2} d\mu _{t} .
\end{equation}

Finally, assume either that
\xmlpar $(i)$ $c_{t}:=\|W_{t}\|_{\infty }< \infty $ and
$\bar{\rho }_{t} < 1-\delta _{t}$;%
\xmlpar or
\xmlpar $(ii)$
$c'_{t}:= \max \left ( 2\||\nabla W_{t}|\|_{\infty }/b_{t} ,\sup _{x : W_{t}(x)
\neq 0}\left (\frac{L_{t}W_{t}}{W_{t}}-\rho _{t}\right )_{-} \right ) <
\infty $ and that for all $t \geq 0$ there exists
$\delta '_{t} \in [0,1)$ and $F'_{t} \in [0,\infty )$ such that
$\delta '_{t} \int _{0}^{t} e^{(c' _{s}-\rho _{s})s} ds <1$ and
$W_{t} \leq \frac{\delta '_{t}}{4B_{t}C_{t}} \left (|\nabla V_{t}|^{2}
- 2 \Delta V_{t}\right ) + F'_{t}$.

Then, for any $f \colon \mathbb{R}^{n} \to \mathbb{R}_{+}$ smooth enough,
it holds
\begin{equation*}
\| P_{t}^{(t)}f\|_{\Phi _{t} {, \mu _{t}}} \leq m(s,t) \| P_{s}^{(s)}f
\|_{\Phi _{s}{, \mu _{s}}}
\end{equation*}
where under assumption $(i)$,
\begin{equation*}
m(s,t)= \exp \left \{  \int _{s}^{t} F_{u} + \left ( b_{u}
\frac{1-e^{-\rho _{u} u}}{\rho _{u}} \right )^{2}
\frac{D_{u}+E_{u}}{2(1-\delta _{u} - \bar{\rho }_{u})} + c_{u}B_{u} u du
\right \}  ,
\end{equation*}
and under assumption $(ii)$,
\begin{align*}
m(s,t) = \exp & \left\{  \int _{s}^{t} \left ( \int _{0}^{u} e^{(c'_{v}-
\rho _{v})v}dv \right )^{2}
\frac{b_{u}(D_{u}+E_{u})}{2(1-\delta _{u} - \bar{\rho }_{u} - \delta '_{u} \int _{0}^{u} e^{(c'_{v}-\rho _{v})v}dv)} \right.
\\
& \qquad \qquad  + \left. \int _{0}^{u} e^{(c'_{v}-\rho _{v})v}dv
\left ( B_{u}F'_{u} + F_{u} \right ) du \right\}  .
\end{align*}
\end{theorem}

\begin{remark}
Observe that, when $\delta _{t}=0$, the assumption
$$
(\dot{V}_{t})_{-} \leq \frac{\delta _{t}}{C_{t}} \left (|\nabla V_{t}|^{2}
- 2 \Delta V_{t}\right ) + D_{t}
$$ 
amounts to
$a_{t} := \|(\dot{V}_{t})_{-}\|_{\infty }< \infty $ which is the assumption
that we used in {\ref{th:main1}}. Also, it might be that
$\dot{V}_{t} \geq 0$ so that, in that case, one chooses
$\delta _{t}=D_{t}=0$.

Observe also that the first inequality in the assumption
$\Phi _{t}(x) \leq x \Phi _{t}'(x) \leq B_{t} \Phi _{t}(x)$ is satisfied
by all Young functions, while the second inequality is a consequence of
the $\Delta _{2}$-condition.

Finally we observe that, although we weakened most of the hypotheses of
{\ref{th:main1}}, one key assumption one would like to remove/reduce
is $b_{t}=\||\nabla \dot{V}_{t}| \|_{\infty }< \infty $. Indeed one interesting
example one would like to deal with is for instance
$V_{t}(x)=(1-t)_{+}^{2}|x|^{2} + |x|^{\alpha }$, with
$\alpha \in (1,2)$, where we have a critical point
$t=1$ in which hypercontractivity property in $\mathbb{L}_{p}$ spaces is
replaced by a weaker property. Such an example is not covered by {\ref{th:main2}} since $b_{t}=\infty $.
\end{remark}

\begin{proof}[Proof of {\ref{th:main2}}]
We start as in the proof of {\ref{th:main1}}. Set
$N(t):= \| P_{t}^{(t)}f \|_{\Phi _{t}{,\mu _{t}}}$,
$g:=\frac{P_{t}^{(t)}f}{N}$ so that from Lemma~{\ref{lem:technical}} for
some non-negative smooth function $f$, it holds
%
\begin{align}
\label{eq:last2}
N'(t) \int g \Phi _{t}'(g) d\mu _{t} & \leq N(t) \left (\int
\dot{\Phi }_{t}(g) d\mu _{t} - \int \phi _{t}''(g) |\nabla g|^{2} d
\mu _{t} - \int \Phi _{t}(g) \dot{V}_{t} d\mu _{t} \right )
\nonumber
\\
& \quad + \int \int _{0}^{t} P_{t-s}^{(t)} f \nabla P_{s}^{(t)}(\Phi _{t}'(g))
\cdot \nabla \dot{V}_{t} dsd\mu _{t}
\\
& \quad - \int \int _{0}^{t} [\nabla V_{t} \cdot \nabla \dot{V}_{t} -
\Delta \dot{V}_{t} ] P_{t-s}^{(t)} f P_{s}^{(t)} (\Phi _{t}'(g)) ds d
\mu _{t} .
\nonumber
\end{align}
We analyze each term separately.

Since $x\Phi _{t}'(x) \geq \Phi _{t}(x)$, it holds
$\int g \Phi _{t}'(g) d\mu _{t} \geq \int \Phi _{t}(g) d\mu _{t} = 1$.
Hence, if $N'(t) \geq 0$, the left hand side of the latter is bounded below
by $N'(t)$.

Since
$(\dot{V}_{t})_{-} \leq \frac{\delta _{t}}{4C_{t}} \left (|\nabla V_{t}|^{2}
- 2 \Delta V_{t}\right ) + F_{t}$, we can use the expansion of the square,
namely Inequality {\eqref{eq:Ubound}} with $f=\sqrt{\Phi _{t}(g)}$, to get
that
\begin{align*}
- \int \Phi _{t}(g) \dot{V}_{t} d\mu _{t} & \leq \int \Phi _{t}(g) (
\dot{V}_{t})_{-} d\mu _{t} \leq \frac{\delta _{t}}{C_{t}} \int
\frac{{\Phi _{t}'}^{2}(g)}{\Phi _{t}(g)} |\nabla g|^{2} d\mu _{t} + F_{t}
\\
& \leq \delta _{t} \int \Phi _{t}''(g) |\nabla g|^{2} d\mu _{t} + F_{t}
.
\end{align*}

Now assume first that assumption $(i)$ holds, namely that
$c_{t}=\|(\nabla V_{t} \cdot \nabla \dot{V}_{t} - \Delta \dot{V}_{t})_{-}
\|_{\infty }< \infty $. In that case we can proceed as in the proof of {\ref{th:main1}} to get
\begin{align*}
&\int \int _{0}^{t} P_{t-s}^{(t)} f \nabla P_{s}^{(t)}(\Phi _{t}'(g))
\cdot \nabla \dot{V}_{t} dsd\mu _{t} \\ 
& \quad - \int \int _{0}^{t} [\nabla V_{t}
\cdot \nabla \dot{V}_{t} - \Delta \dot{V}_{t} ] P_{t-s}^{(t)} f P_{s}^{(t)}
(\Phi _{t}'(g)) ds d\mu _{t}
\\
& \leq b_{t} \int \int _{0}^{t} P_{t-s}^{(t)} f |\nabla P_{s}^{(t)}(
\Phi _{t}'(g))| dsd\mu _{t} +c_{t} \int \int _{0}^{t} P_{t-s}^{(t)} f P_{s}^{(t)}
(\Phi _{t}'(g)) ds d\mu _{t}
\\
& \leq b_{t} N(t) \frac{1-e^{-\rho _{t} t}}{\rho _{t}} \int g \Phi _{t}''(g)
|\nabla g| d\mu _{t} + c_{t} tN(t) \int g \Phi _{t}'(g) d\mu _{t} .
\end{align*}
Using our assumption on $\Phi _{t}$ and
$uv \leq \frac{1}{2 \varepsilon }u^{2} + \frac{\varepsilon }{2}v^{2}$, the
latter is bounded above, for any $\varepsilon >0$, by
\begin{align*}
& b_{t} N(t) \frac{1-e^{-\rho _{t} t}}{\rho _{t}} \left (
\frac{1}{2\varepsilon } \int g^{2} \Phi _{t}''(g) d\mu _{t} +
\frac{\varepsilon }{2} \int \Phi _{t}''(g) |\nabla g|^{2} d\mu _{t}
\right ) \\
& \quad + c_{t}B_{t}tN(t) \int \Phi _{t}(g) d\mu _{t}
\\
& \leq b_{t} N(t) \frac{1-e^{-\rho _{t} t}}{\rho _{t}}
\frac{1}{2\varepsilon }(D_{t}+E_{t}) 
+ c_{t}B_{t}tN(t) \\
& \quad + \varepsilon
\frac{b_{t}N(t)(1-e^{-\rho _{t} t})}{2 \rho _{t}} \int \Phi _{t}''(g) |
\nabla g|^{2} d\mu _{t}.
\end{align*}
Choose $\varepsilon $ so that
$\varepsilon \frac{b_{t}(1-e^{-\rho _{t} t})}{2 \rho _{t}} = [1-
\delta -\bar{\rho }_{t}]/2$ so that, collecting the above computations together
with inequality {\eqref{eq:Fsob2}}, we can conclude that for any $t$ such
that $N'(t) \geq 0$,
\begin{equation*}
N'(t) \leq N(t) \left (F_{t} + \left ( b_{t}
\frac{1-e^{-\rho _{t} t}}{\rho _{t}} \right )^{2}
\frac{D_{t}+E_{t}}{2(1-\delta _{t} - \bar{\rho }_{t})} + c_{t}B_{t} t
\right )
\end{equation*}
from which the conclusion under assumption $(i)$ follows.

Now we turn to assumption $(ii)$. We need to bound the last two terms in {\eqref{eq:last2}}. Using Proposition~{\ref{prop:RZ}} with
$W:=W_{t}/b_{t}$ (observe that, since $\mu _{t}$ is a probability measure,
$b_{t} \neq 0$), it holds
\begin{align*}
&\int \int _{0}^{t} P_{t-s}^{(t)} f \nabla P_{s}^{(t)}(\Phi _{t}'(g))
\cdot \nabla \dot{V}_{t} dsd\mu _{t} \\
& \quad - \int \int _{0}^{t} [\nabla V_{t}
\cdot \nabla \dot{V}_{t} - \Delta \dot{V}_{t} ] P_{t-s}^{(t)} f P_{s}^{(t)}
(\Phi _{t}'(g)) ds d\mu _{t}
\\
& \leq b_{t} \int \int _{0}^{t} P_{t-s}^{(t)} f \left ( |\nabla P_{s}^{(t)}(
\Phi _{t}'(g))| + \frac{W_{t}}{b_{t}} P_{s}^{(t)} (\Phi _{t}'(g))
\right ) ds d\mu _{t}
\\
& \leq b_{t} \int \int _{0}^{t} e^{(c_{s} - \rho _{s})s} P_{t-s}^{(t)}
f P_{s}^{(t)} \left ( |\nabla \Phi '(g)| + \frac{W_{t}}{b_{t}} \Phi '(g)
\right )
\\
& = N(t) \int _{0}^{t} e^{(c_{s} - \rho _{s})s} ds \left ( b_{t}
\int g\Phi _{t}''(g) |\nabla g| d\mu _{t} + \int W_{t} g \Phi _{t}'(g)
d\mu _{t} \right )
\end{align*}
where we used the reversibility in the last inequality. For the first term
in the right hand side of the latter, we proceed as for assumption
$(i)$. Namely, it holds for all $\varepsilon >0$
\begin{align*}
\int g\Phi _{t}''(g) |\nabla g| d\mu _{t} & \leq
\frac{1}{2 \varepsilon }\int g^{2} \Phi _{t}''(g) d\mu _{t} +
\frac{\varepsilon }{2} \int \Phi _{t}''(g) |\nabla g|^{2} d\mu _{t}
\\
& \leq \frac{1}{2 \varepsilon } (D_{t}+E_{t}) + \frac{\varepsilon }{2}
\int \Phi _{t}''(g) |\nabla g|^{2} d\mu _{t} .
\end{align*}
For the second term, we use the expansion of the square (inequality {\eqref{eq:Ubound}} with $f=\sqrt{\Phi _{t}(g)}$) to get that
\begin{align*}
\int W_{t} g \Phi _{t}'(g) d\mu _{t} & \leq B_{t} \int W_{t} \Phi _{t}(g)
d\mu _{t} \\
& \leq 
\frac{\delta '}{4C_{t}} \int \Phi _{t}(g) \left (|
\nabla V_{t}|^{2} - 2 \Delta V_{t}\right )d\mu _{t} + B_{t} F'_{t}
\\
& \leq \frac{\delta '}{4C_{t}} \int
\frac{\Phi _{t}'(g)^{2}}{\Phi _{t}(g)}|\nabla g|^{2} d\mu _{t} + B_{t}
F'_{t} \\
& \leq 
\delta ' \int \Phi _{t}''(g)^{2}\|\nabla g|^{2} d\mu _{t} +
B_{t} F'_{t} .
\end{align*}
Summarizing, under assumption $(ii)$, when $N'(t) > 0$, we obtain
\begin{align*}
\frac{N'(t)}{N(t)} & \leq \int \dot{\Phi }_{t}(g) d\mu _{t} \\
& \quad 
- \Big( 1-
\delta _{t} - \frac{\varepsilon }{2} \int _{0}^{t} e^{(c'_{s}-\rho _{s})s}ds + \delta '_{t} \int _{0}^{t} e^{(c'_{s}-\rho _{s})s}ds \Big) \int
\phi _{t}''(g) |\nabla g|^{2} d\mu _{t}
\\
& \quad {}+ \int _{0}^{t} e^{(c'_{s}-\rho _{s})s}ds \left (
\frac{b_{t}(D_{t}+E_{t})}{2\varepsilon } + B_{t}F'_{t} + F_{t} \right )
\\
& \leq \int _{0}^{t} e^{(c'_{s}-\rho _{s})s}ds \left (
\frac{b_{t}(D_{t}+E_{t})}{2\varepsilon } + B_{t}F'_{t} + F_{t} \right )
\qquad \text{ (thanks to {\eqref{eq:Fsob2}})}
\\
& = \left ( \int _{0}^{t} e^{(c'_{s}-\rho _{s})s}ds \right )^{2}
\frac{b_{t}(D_{t}+E_{t})}{2(1-\delta _{t} - \bar{\rho }_{t} - \delta '_{t} \int _{0}^{t} e^{(c'_{s}-\rho _{s})s}ds)}
\\
&\quad {}
+ \int _{0}^{t} e^{(c'_{s}-\rho _{s})s}ds \left ( B_{t}F'_{t} + F_{t}
\right )
\end{align*}
for $\varepsilon $ so that
$\frac{\varepsilon }{2} \int _{0}^{t} e^{(c'_{s}-\rho _{s})s}ds =
\frac{1-\delta _{t} - \bar{\rho }_{t} - \delta '_{t} \int _{0}^{t} e^{(c'_{s}-\rho _{s})s}ds }{2}$.
The desired conclusion follows.
\end{proof}

In the proof of {\ref{th:main2}} we used the following results borrowed
from \cite{RZ}.

\begin{proposition}
\label{prop:RZ}
Let $L=\Delta - \nabla U \cdot \nabla $, on $\mathbb{R}^{n}$, and denote
by $(P_{t})_{t \geq 0}$ its associated semi-group. Assume that
$U \colon \mathbb{R}^{n} \to \mathbb{R}$ is smooth enough and satisfies
$\int e^{-U} = 1$ so that $\mu (dx)=e^{-U(x)}dx$ is a probability measure
on $\mathbb{R}^{n}$ and $\mathrm{Hess}(U) \geq \rho $ (as a matrix) for
some $\rho \in \mathbb{R}$. Let
$W \colon \mathbb{R} \to \mathbb{R}_{+}$ be such that
$c:= \max \left (2 \| |\nabla W |\|_{\infty }, \sup _{x:W(x) \neq 0}
\left ( \frac{LW}{W} - \rho \right )_{-} \right ) < \infty $. Then, for
all $f$ non negative
\begin{equation*}
|\nabla P_{t}f| + W P_{t} f \leq e^{(c-\rho )t} P_{t} \left ( |
\nabla f| + W f \right ) .
\end{equation*}
\end{proposition}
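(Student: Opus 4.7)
The plan is to prove the estimate by a semigroup interpolation argument in the spirit of Bakry--Emery. Write $u_s := P_s f$, $G_s := |\nabla u_s| + W u_s$, and introduce
\[
\phi(s) := e^{-(c-\rho)s}\, P_{t-s}(G_s), \qquad s \in [0,t].
\]
At the endpoints one has $\phi(0) = P_t(|\nabla f| + Wf)$ and $\phi(t) = e^{-(c-\rho)t}(|\nabla P_t f| + W P_t f)$. Thus the claim reduces to the monotonicity $\phi'(s)\le 0$.

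To compute $\phi'(s)$, differentiate under the semigroup (using $\partial_s u_s = L u_s$ and $\partial_s |\nabla u_s| = \nabla u_s \cdot \nabla L u_s /|\nabla u_s|$ where $|\nabla u_s| > 0$): the chain/product rule yields, schematically,
\[
\phi'(s) = -(c-\rho)\,\phi(s) + e^{-(c-\rho)s}\,P_{t-s}\!\left(-L G_s + \tfrac{\nabla u_s \cdot \nabla L u_s}{|\nabla u_s|} + W L u_s\right).
\]
Expanding $L G_s = L|\nabla u_s| + L(W u_s)$ with the Leibniz identity $L(Wu_s) = W L u_s + u_s L W + 2\nabla W \cdot \nabla u_s$, the inner bracket collapses to
\[
-\Bigl(L|\nabla u_s| - \tfrac{\nabla u_s \cdot \nabla L u_s}{|\nabla u_s|}\Bigr) \;-\; u_s\, L W \;-\; 2\,\nabla W \cdot \nabla u_s .
\]

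The key pointwise bounds are: (i) the Bochner identity together with $\mathrm{Hess}(U)\ge \rho$ and Kato's inequality give $L|\nabla u_s| - \tfrac{\nabla u_s \cdot \nabla L u_s}{|\nabla u_s|} \ge \rho\,|\nabla u_s|$; (ii) $|2\nabla W\cdot \nabla u_s|\le 2\||\nabla W|\|_\infty\,|\nabla u_s|\le c\,|\nabla u_s|$; (iii) since $W\ge 0$ and $u_s\ge 0$, on $\{W>0\}$ the assumption $(LW/W-\rho)_-\le c$ yields $-u_s LW \le (c-\rho)\,Wu_s$, while on $\{W=0\}$ the point is a minimum of $W$ so $LW\ge 0$ and $-u_s LW\le 0 = (c-\rho)\,Wu_s$. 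Summing these estimates,
\[
-L G_s + \tfrac{\nabla u_s \cdot \nabla L u_s}{|\nabla u_s|} + W L u_s \;\le\; (c-\rho)\bigl(|\nabla u_s| + W u_s\bigr) = (c-\rho)\,G_s,
\]
which, once inserted in the expression for $\phi'(s)$ and applying $P_{t-s}$ (positivity preserving), cancels the first term and gives $\phi'(s)\le 0$. Integrating over $[0,t]$ yields the proposition.

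The main obstacle is the non-smoothness of $|\nabla u_s|$ at its zero set, which makes both $\partial_s|\nabla u_s|$ and the Bochner-type lower bound on $L|\nabla u_s|$ only formal. The standard remedy is to regularize by working with $\sqrt{|\nabla u_s|^2 + \varepsilon}$, carry out the computation above in this smooth setting (where Kato's inequality survives in the form $|\nabla\sqrt{|\nabla u|^2+\varepsilon}|^2\le|\nabla^2 u|^2$), and then pass to the limit $\varepsilon\downarrow 0$, invoking positivity of $P_t$ and monotone/dominated convergence. A parallel care is needed at the boundary of $\{W>0\}$, handled as in (iii) above by the non-negativity of $W$.
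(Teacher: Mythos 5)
The paper does not actually prove Proposition~\ref{prop:RZ}; it is stated as imported from the external preprint \cite{RZ} (``Bakry--Emery calculus for diffusion with additional multiplicative term''), so there is no proof in this document to compare against. Your argument, however, is correct and is exactly the semigroup interpolation scheme that the title of \cite{RZ} suggests and that any reader would expect here.

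To spell out why it works: the computation of $\phi'(s)$ is right (using $\frac{d}{ds}P_{t-s}h=-P_{t-s}Lh$ and the Leibniz rule $L(Wu)=WLu+uLW+2\nabla W\cdot\nabla u$), and the three pointwise bounds you invoke are all legitimate. For (i), the Bochner formula $L|\nabla u|^2-2\nabla u\cdot\nabla Lu=2\langle\mathrm{Hess}(U)\nabla u,\nabla u\rangle+2\|\mathrm{Hess}(u)\|_{HS}^2$ together with $\|\mathrm{Hess}(u)\nabla u\|\le\|\mathrm{Hess}(u)\|_{HS}|\nabla u|$ (this is the Kato step) gives $L|\nabla u|-\nabla u\cdot\nabla Lu/|\nabla u|\ge\rho|\nabla u|$. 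For (iii) on $\{W=0\}$ the point is a global minimum of $W\ge0$, so $\nabla W=0$ and $\Delta W\ge0$, hence $LW\ge0$, and since $u_s=P_sf\ge0$ (positivity preservation) this term is $\le0=(c-\rho)Wu_s$ as needed. Adding (i)--(iii) yields the inner bracket $\le(c-\rho)G_s$, and applying the positivity-preserving operator $P_{t-s}$ cancels the prefactor exactly, giving $\phi'\le0$. The regularization $|\nabla u_s|\rightsquigarrow\sqrt{|\nabla u_s|^2+\varepsilon}$ and the limit $\varepsilon\downarrow0$ are the standard way to make the Kato inequality and the $\partial_s$-differentiation rigorous at critical points of $u_s$, and your handling of the boundary of $\{W>0\}$ via the minimum argument is the right fix there. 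One small point worth making explicit when you write this up: the choice $c=\max(2\||\nabla W|\|_\infty,\sup_{W\ne0}(LW/W-\rho)_-)$ is used twice, once in bound (ii) and once in bound (iii), and both must hold simultaneously with the same $c$ — the $\max$ in the hypothesis is precisely what makes the sum close to $(c-\rho)G_s$ rather than to $(2\||\nabla W|\|_\infty+\text{something})G_s$.
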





\appendix{}
\label{appA}
Here we discuss the following formula
\begin{equation*}
\partial _{t} P_{t}^{(t)}f = L_{t} f + \int _{0}^{t} \left ( e^{\tau L_{(t)}}
\dot{L_{t}}e^{ (t-\tau ) L_{(t)} }f\right ) d\tau .
\end{equation*}
Note that
\begin{equation*}
\begin{split} \partial _{t} P_{t}^{(t)}f &= \lim _{s\to 0}\frac{1}{s}
\left (P_{t+s}^{(t+s)}f - P_{t}^{(t)}f\right )
\\
&= \lim _{s\to 0}\frac{1}{s}\left (P_{t}^{(t+s)}f -P_{t}^{(t)}f
\right ) + \lim _{s\to 0}\frac{1}{s}\left ( P_{ t+s }^{(t+s)}f - P_{t}^{(t+s)}f
\right )
\end{split}
\end{equation*}
For the first term on the right hand side we have
\begin{equation*}
P_{t}^{(t+s)}f - P_{t}^{(t)}f = \int _{0}^{t} \left ( e^{\tau L_{(t+s)}}
\left (L_{(t+s)}-L_{(t)}\right )e^{ (t-\tau ) L_{(t)} }f\right ) d
\tau
\end{equation*}
provided $e^{(t-\tau ) L_{(t)} }f$ is in the domain of
$L_{(t+s)}-L_{(t)}$ for every sufficiently small $s$ and all
$\tau \in [0,t]$. Hence if the limit
\begin{equation*}
\lim _{s\to 0}\frac{1}{s} \left (L_{(t+s)}-L_{(t)}\right )e^{(t-\tau )
L_{(t)} } f \equiv \dot{L_{t}}e^{(t-\tau ) L_{(t)} } f
\end{equation*}
is well defined, we have
\begin{equation*}
\lim _{s\to 0}\frac{1}{s}\left (P_{t}^{(t+s)}f -P_{t}^{(t)}f\right ) =
\int _{0}^{t} \left ( e^{\tau L_{(t)}} \dot{L_{t}}e^{ (t-\tau ) L_{(t)}
}f\right ) d\tau
\end{equation*}
On the other hand
\begin{equation*}
P_{ t+s }^{(t+s)}f - P_{t}^{(t+s)}f = L_{(t+s)} \int _{0}^{s} e^{ (t+
\tau ) L_{(t+s)} }f d\tau
\end{equation*}
is well defined for $C_{0}$-semigroup and for $f$ in the domain of
$L_{t}$we have
\begin{equation*}
\lim _{s\to 0}\frac{1}{s}\left ( P_{ t+s }^{(t+s)}f - P_{t}^{(t+s)}f
\right ) = L_{t} f .
\end{equation*}
Combining all the above yields
\begin{equation*}
\partial _{t} P_{t}^{(t)}f = L_{t} f + \int _{0}^{t} \left ( e^{\tau L_{(t)}}
\dot{L_{t}}e^{ (t-\tau ) L_{(t)} }f\right ) d\tau .
\end{equation*}
%
%
%

\label{bibliography}
\bibliographystyle{plain}

\def\cprime{$'$}

\end{document}